\documentclass[12pt,twoside,reqno]{amsart}
\addtolength{\textwidth}{2cm}
\addtolength{\textheight}{2cm}
\addtolength{\topmargin}{-2cm}
\usepackage{graphicx}
\usepackage{amsmath}
\usepackage{amssymb}
\usepackage{amsfonts}
\usepackage{multicol}
\usepackage[ansinew]{inputenc} 
\usepackage{amscd} 
\usepackage[mathscr]{eucal}
\usepackage{tikz}
\usetikzlibrary{automata,arrows,positioning,calc}

\usepackage{hyperref, enumerate}
 


\newcommand{\R}{\mathbb{R}}
\newcommand{\C}{\mathbb{C}}
\newcommand{\N}{\mathbb{N}}
\newcommand{\Z}{\mathbb{Z}}

\newcommand{\SL}{{\rm SL}}

\newcommand{\Mat}{{\rm Mat}}

\newcommand{\abs}[1]{\bigl| #1 \bigr|} 
\newcommand{\norm}[1]{\lVert#1\rVert} 
\newcommand{\normtwo}[1]{
{\left\vert\kern-0.25ex\left\vert\kern-0.25ex\left\vert #1 
    \right\vert\kern-0.25ex\right\vert\kern-0.25ex\right\vert} }





\newcommand{\ep}{\epsilon}






\newcommand{\Pp}{\mathbb{P}}



\newcommand\restr[2]{{
  \left.\kern-\nulldelimiterspace 
  #1 
  \vphantom{\big|} 
  \right|_{#2} 
  }}

\theoremstyle{plain}
\newtheorem{theorem}{Theorem}
\newtheorem{proposition}{Proposition}
\newtheorem{corollary}[proposition]{Corollary}
\newtheorem{lemma}{Lemma}

\numberwithin{equation}{section}

\theoremstyle{remark}
\newtheorem{remark}{Remark}[section]

\theoremstyle{definition}
\newtheorem{definition}{Definition}[section]

\newcommand{\Mscr}{\mathscr{M}}
\newcommand{\Ascr}{\mathscr{A}}
\newcommand{\Bscr}{\mathscr{B}}
\newcommand{\Cscr}{\mathscr{C}}
\newcommand{\Escr}{\mathscr{E}}


\title[A random cocycle]{ A random cocycle with non H\"older \\ Lyapunov exponent }

\newcommand{\Ell}{C}
\newcommand{\Hyp}{D}

\begin{document}

\newcommand{\one}{{\bf 1}}
\newcommand{\Hscr}{\mathscr{H}}
\newcommand{\EE}{\mathbb{E}}
\newcommand{\sgn}{{\rm sgn}}

\author[P. Duarte]{Pedro Duarte}
\address{Departamento de Matem\'atica and CMAFCIO\\
Faculdade de Ci\^encias\\
Universidade de Lisboa\\
Portugal 
}
\email{pmduarte@fc.ul.pt}

\author[S. Klein]{Silvius Klein}
\address{Departamento de Matem\'atica, Pontif\'icia Universidade Cat\'olica do Rio de Janeiro (PUC-Rio), Brazil}
\email{silviusk@impa.br}

\author[M. Santos]{Manuel Santos}
\address{ Departamento de Matem\'atica, IST, Universidade de Lisboa, Portugal}
\email{manuel.batalha.dos.santos@ist.utl.pt}

\maketitle

\begin{abstract}
We provide an example of a Schr\"odinger cocycle over a mixing Markov shift for which the integrated density of states has a very weak modulus of continuity, close to the log-H\"older lower bound established by W. Craig and B. Simon in~\cite{CS83}.
This model is based upon a classical example due to Y. Kifer~\cite{Kifer-ex} of a random Bernoulli cocycle with zero Lyapunov exponents which is not strongly irreducible. 
It follows that the  Lyapunov exponent of a Bernoulli cocycle 
near this Kifer example cannot be H\"older or weak-H\"older continuous, thus providing a  limitation on the modulus of continuity of the Lyapunov exponent of  random cocycles.      
\end{abstract}

\section{Introduction}

This paper is concerned with providing limitations on the modulus of continuity of the (maximal) Lyapunov exponent (LE) of random linear cocycles. By a random linear cocycle we understand the skew-product dynamical system defined by a Bernoulli or a Markov shift on the base and a locally constant linear fiber map. We fix the base dynamics and vary the fiber map relative to the uniform norm, thus continuity is with respect to the fiber map.

\smallskip

 Continuity of the LE in a generic setting (i.e. assuming irreducibility and contraction, which in particular imply simplicity of the LE) was first established by H. Furstenberg and Y. Kifer~\cite{FKi}. Recently, the genericity assumption was removed by C. Bocker-Neto and M. Viana~\cite{BV} (in the two-dimensional Bernoulli setting) and by E. Malheiro and M. Viana~\cite{Viana-M} (in a certain two-dimensional Markov setting). A higher dimensional version of the result in~\cite{BV} was announced by A. Avila, A. Eskin and M. Viana~\cite[Note 10.7]{Viana-book}. All of these results are not quantitative, i.e. they do not provide a modulus of continuity for the LE.
 
 The first quantitative result, namely H\"older continuity of the LE, was obtained by E. Le Page~\cite{LePage} in the {\em generic}, Bernoulli setting. This result refers to a one-parameter family of random linear cocycles; as such, it has been widely used in the theory of discrete, random, one-dimensional or strip  Schr\"odinger operators (which give rise to such one-parameter families of cocycles). Still in the generic setting, extensions of this result were obtained by P. Duarte and S. Klein~\cite{DK-book} and by A. Baraviera and P. Duarte~\cite{BD}. See also P. Duarte and S. Klein~\cite{DK-31CBM} for a simpler approach in the two-dimensional setting.   

\medskip

More recently, the first two authors of this paper considered the problem\footnote{Independently and with different methods, the same problem has also been studied by E. Y. Tall and M. Viana.}
 of obtaining a modulus of continuity of the LE for two-dimensional random Bernoulli  linear  
cocycles in the absence of any genericity assumption (see \cite{DK-RandomGL2}), but assuming the simplicity of the maximal LE.  Under this assumption, the results in \cite{DK-RandomGL2} establish local weak-H\"older continuity of the maximal LE in the most ``degenerate'' situation (i.e. in the vicinity of a diagonalizable cocycle) and local H\"older continuity elsewhere. There is work in progress establishing similar results  for Markov cocycles. 

\smallskip

A natural question arising from these developments is determining {\em how weak} the modulus of continuity of the LE can be. An example of B. Halperin, made rigorous by B. Simon and M. Taylor in~\cite{SimonTaylor}, shows that at this level of generality, the LE cannot be more regular than H\"older, and in fact the H\"older exponent may be arbitrarily close to zero.  When the LE is simple (that is, positive, in the $\SL_2 (\R)$ setting),  by \cite{DK-RandomGL2} it is at least weak-H\"older continuous. Can it be much weaker than this when the LE is {\em not simple}?

\medskip

Y. Kifer~\cite{Kifer-ex} considered the random Bernoulli cocycle $(C,D;p,1-p)$ generated by the matrices
\begin{equation}
\label{C & D def}
C:= \begin{bmatrix}
        0 & - 1\\
        1 & 0
    \end{bmatrix} 
    \qquad \text{ and } \qquad    
D := \begin{bmatrix}
        e & 0\\
        0 & e^{-1}
    \end{bmatrix} 
\end{equation}
with probabilities $(p, 1-p)$. A simple calculation shows that if $p > 0$ then the corresponding Lyapunov exponent is $0$, while when $p=0$, the Lyapunov exponent is $1$, thus implying the discontinuity of the Lyapunov exponent as a function of the {\em probability vector} $(p,1-p)$ at the boundary of the simplex.
In this work we provide an upper-bound  for the regularity of the LE as a function of the {\em matrices} at $(C,D;\frac{1}{2},\frac{1}{2})$.
A similar upper-bound should hold for any probability $0<p<1$.


\medskip

So far the only available method for proving limitations on the regularity of the LE for random cocycles is that of Halperin. This method in fact relies on the Thouless formula, which relates the LE to another quantity called the integrated density of states (IDS). The Thouless formula is only available for Schr\"odinger (and  Jacobi) cocycles, which is not the case with  Kifer's example. Our idea was then to embed Kifer's example into a family of Schr\"odinger cocycles (thus making the Thouless formula applicable) but over a finite type mixing Markov shift. 

\medskip

The example in this paper shows a huge breakdown on the regularity of the IDS (Theorem~\ref{main theorem}) which implies a similar breakdown for the LE in Kifer's example (Theorem~\ref{main corollary}). In this example the two assumptions of the classical result of Le Page (and of its extensions) fail, namely the cocycle is not strongly irreducible and it has zero Lyapunov exponent. 

Furthermore, Proposition~\ref{Lipschitz prop} shows that given a cocycle with zero LE, if it is strongly irreducible, then the LE must be pointwise Lipschitz at that cocycle. 

Therefore, in some sense it is the simultaneous  failing of the two assumptions that produces the break in regularity.

\bigskip

\section{Basic concepts}
\label{sec: basic concepts}

\subsection*{Linear cocycles}
Consider a probability space $(X,\mu)$ and an ergodic  measure preserving  transformation $T\colon X\to X$ on $(X,\mu)$.
An {\em  $\SL(2,\R)$-linear cocycle} over $T$  is any map $F_A:X\times \R^2  \to  X\times \R^2$  defined by a measurable function 
$A:X\to \SL(2,\R)$ through the expression 
$$F_A(x,v):=( T x, A(x)\, v ).$$
When the base map $T$ is fixed we identify  $F_A$ with $A$.

The forward iterates $F_A^n$  are given by
$F_A^n(x,v)=(T^n x, A^{(n)}(x) v)$,  where  $$
 A^{(n)}(x):= A(T^{n-1} x)\,\ldots \, A(T x)\,A(x)\quad (n\in\N) \;.$$
  
 The  Lyapunov exponent (LE) of 
 $F_A$ is defined as the $\mu$-almost sure limit
 $$ L(A):= \lim_{n\to+\infty}\frac{1}{n}\,\log \norm{A^{(n)}(x)} ,$$
whose existence follows by Furstenberg-Kesten's theorem~\cite{FK60}.

 \bigskip

\subsection*{Schr\"odinger operators and cocycles}
Let $T\colon X\to X$ be an ergodic transformation
over a probability space $(X,\mu)$. 
Denote by $l^2 (\Z)$ the usual Hilbert space of square summable sequences of real numbers $(\psi_n)_{n\in\Z}$. Note that
 $\lim_{n\rightarrow \pm \infty}\psi_n=0$ for all $\psi\in l^2(\Z)$.
 
Given some bounded  measurable function $\upsilon \colon X \to \R$,
at every site $n$ on the integer lattice $\Z$ we define the potential
$$v_n (x) := \upsilon (T^n x)\,.$$

The {\em discrete ergodic Schr\"odinger operator} with potential
$n\mapsto \upsilon_n(x)$  is the operator $H _x$ defined on $l^2 (\Z) \ni \psi  = \{\psi_n\}_{n \in \Z}$ as follows:
\begin{equation}\label{ s op}
[ H_x \, \psi ]_n := - (\psi_{n+1} + \psi_{n-1}) +  v_n (x) \, \psi_n\, .
\end{equation}

Due to the ergodicity of the system, the spectral properties of the family of operators $\{ H_x \colon x \in X \}$ are independent of $x$ $\mu$-almost surely.

Consider the Schr\"odinger  eigenvalue equation
\begin{equation}\label{s eq}
H_x \, \psi = E \, \psi\,,
\end{equation}
for some eigenvalue $E \in \R$ and  eigenvector $\psi = \{\psi_n\}_{n \in \Z}$.

The associated  {\em Schr\"odinger cocycle} is the cocycle $A_{ E}$ defined by
$$A_{  E} (x) := \left[ \begin{array}{ccc} 
 \upsilon (x)  - E  & &  -1  \\
1 & &  \phantom{-}0 \\  \end{array} \right] \in \SL_2 (\R)\,.$$

Note that the Schr\"odinger equation~\eqref{s eq} is a second order finite difference equation. An easy calculation shows that its formal solutions are given by 
\begin{align} 
\label{schrodinger formal solution}
\left[\begin{array}{c}
\psi_{n}\\
\psi_{n-1} \\ \end{array}\right]  =  
A^{(n)}_{ E} (x)
   \left[\begin{array}{c}
\psi_0\\
\psi_{-1} \\ \end{array}\right]\, .
\end{align}

\bigskip
 
Denote by $P_n\colon l^2(\Z)\to \C^{n+1}$ the coordinate projection to   $\{0,1, 2, \ldots, n\} \subset \Z$, by $P_n^\ast$ its adjoint and let 
\begin{equation}
\label{H truncation}
H^{(n)}_x  := P_n \, H (x) \, P_n^\ast .
\end{equation}

This finite rank operator is called the {\em $n$-truncation} of $H_x$.
  By ergodicity, the following limit exists for $\mu$-a.e. $x \in X$: 
$$N (E) := \lim_{n\to\infty} \, \frac{1}{n+1} \, \# \big( (- \infty, E] \cap \text{ Spectrum of } H^{(n)}_x \big)\,.$$
The function $E \mapsto N (E)$ is called the {\em integrated density of states} (IDS) of the family of ergodic operators $\{H_x \colon x \in X\}$ (see~\cite{David-survey}).

The LE and the IDS are related via the Thouless formula:
$$L (E) = \int_\R \log \abs{E-E'} d N (E')\,.$$


\subsection*{Random cocycles}
Let $\Sigma=\{1,\ldots, s\}$ be a finite alphabet, let $X=\Sigma^\Z$
be the compact product space of bi-infinite sequences of symbols in the set $\Sigma$ and let $T\colon X\to X$ be the {\em full shift map},
$T \{x_n\}_{n\in\Z} := \{x_{n+1}\}_{n\in\Z}$.  Given a probability vector  $q=(q_1,\ldots, q_s)$ on $\Sigma$, consider the product probability measure $\Pp_q=q^\Z$ on $X$. The map $T$ determines an ergodic transformation on $(X,\Pp_q)$ called the  two-sided {\em Bernoulli shift}.


Next we introduce the broader class of Markov shifts.
Recall that a {\em stochastic matrix} is any square matrix $P=(p_{ij})\in\Mat_s(\R)$ such that:
\begin{enumerate}
\item $p_{ij}\geq 0$ for all $i,j=1,\ldots, s$,
\item $\sum_{i=1}^s p_{ij}=1$ for all $j=1,\ldots, s$.
\end{enumerate}
A {\em $P$-stationary vector} is any probability vector $q\in \R^s$
such that $q=P\, q$, that is, 
$q_i=\sum_{j=1}^s p_{ij}\,q_j$ for all $i=1,\ldots, s$.
Each power $P^n$ is itself a  stochastic matrix.
Given a pair $(P,q)$ where $P$ is a stochastic matrix   and $q$ is a $P$-stationary probability vector   there exists a unique  probability measure $\Pp=\Pp_{(P,q)}$
on  $X=\Sigma^\Z$  such that the stochastic process $\{e_n\colon X\to \Sigma\}_{n\in\Z}$, $e_n(x):=x_n$,  has constant distribution $q$ and transition probability matrix $P$, i.e., for all $i,j=1,\ldots,s$,
\begin{enumerate}
\item $\Pp[ \, e_n=i \,] = q_i$, 
\item $\Pp[ \, e_{n}=i\,\vert \, e_{n-1}=j\, ] = p_{ij}$.
\end{enumerate}
The support of $\Pp_{(P,q)}$ is the space of admissible sequences
$$ B(P):=\{ x\in X\colon  p_{x_n x_{n-1}}>0 \, \forall n\in\Z \} $$
commonly referred to as the {\em sub-shift of finite type} defined by $P$.
The stochastic matrix $P$ is called {\em primitive} if  $P^n>0$ for some $n\geq 1$ (that is all the entries of $P^n$ are positive). If $P$ is primitive then the two-sided shift $T\colon X\to X$ is a mixing measure preserving transformation on $(X,\Pp_{(P,q)})$,  called a {\em mixing Markov shift}.


A  (locally constant)  {\em random cocycle} is any cocycle
$A:X \to\SL(2,\R)$ over a Bernoulli or Markov shift $T$  
such that  $A(\{x_n\})$ depends only on the first coordinate $x_0\in\Sigma$. Once the base dynamics given by the full shift is fixed, a  random  cocycle is completely determined by a list of $s$ matrices, $A_1,\ldots, A_s\in \SL(2,\R)$, such that
$A(\{x_n\})=A_{x_0}$.

\bigskip

\subsection*{Modulus of continuity}
 
Any continuous   and strictly-increasing function\\
 $\omega\colon [0,+\infty)\to [0, +\infty)$ with $\omega(0)=0$
 will be referred to as a {\em modulus of continuity}. Given a metric space $(X,d)$, we say that a function $f\colon X\to\R$ has modulus of continuity $\omega$ if  
 $$\abs{f(x)-f(y)}\leq \omega(d(x,y)),\quad \forall\, x,y\in X. $$
 
\smallskip

Let us recall some common moduli of continuity.

\smallskip
 
A function $f\colon X\to\R$ is  {\em  H\"older} continuous if it has  modulus of continuity $\omega(r)=C\,r^\alpha = C\, e^{ - \alpha\, \log \frac{1}{r}}$ for some pair of constants $C<\infty$ and $0<\alpha\leq 1$.
When $\alpha=1$ this corresponds to {\em Lipschitz} continuity.

\smallskip

A function $f$ is  {\em weak-H\"older} continuous if it has  modulus of continuity   $\omega(r)=  C\, e^{- \alpha\, (\log \frac{1}{r})^\theta}$ for some constants $C<\infty$, $0<\alpha\leq 1$  and $0<\theta\leq 1$.
When $\theta=1$, this corresponds to H\"older continuity.

\smallskip
 
A function $f$ is  {\em $\log$-H\"older} continuous if it has  modulus of continuity   $\omega(r)=  C \, (\log \frac{1}{r})^{-1} $ for some constant $C<\infty$.

\smallskip

Additionally, we define a stronger modulus of continuity than $\log$-H\"older.

\begin{definition}
We say that a function $f$ is $(\gamma, \beta)$-log-H\"older continuous if it has a modulus of continuity $\omega(r)=  C\, e^{- \beta \, (\log \, \log \frac{1}{r})^\gamma}$ for some constants $C<\infty$, $\gamma \geq 1$  and $\beta \ge 1$.
\end{definition}

Note that when $\gamma=1$ and $\beta=1$, this corresponds to $\log$-H\"older continuity, while as these parameters increase, the modulus of continuity improves.


Below we summarize the relations  between these moduli of continuity.
\begin{center}
H\"older \; $\Rightarrow$ \; weak-H\"older 
\; $\Rightarrow$ \; $(\gamma, \beta)$-log-H\"older \; $\Rightarrow$ \;  log-H\"older 
\end{center}

\bigskip

For discrete ergodic Schr\"odinger operators,
W. Craig and B. Simon~\cite{CS83} proved that the IDS is always log-H\"older continuous. 

M. Goldstein and W. Schlag~\cite[Lemma 10.3]{GS-Holder} showed that any singular integral operator on a space of functions preserves the modulus of continuity, as long as it is sharp enough. This applies to $(\gamma, \beta)$-log-H\"older continuity with $\gamma>1, \beta\ge1$ (and so to weak-H\"older and H\"older as well) but not to log-H\"older (or to a slighly stronger) modulus of continuity.

Since the Thouless formula $$L (E) = \int_\R \log \abs{E-E'} d N (E')$$ relates the IDS and the LE via such a singular integral operator (essentially the Hilbert transform), we conclude the following. 

\smallskip

\begin{proposition}
\label{MOC LE<=> IDS}
Given $\gamma>1, \beta\ge1$, the LE is $(\gamma, \beta)$-log-H\"older continuous if and only if the IDS is  $(\gamma, \beta)$-log-H\"older continuous.
\end{proposition}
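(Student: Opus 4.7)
The plan is to derive the equivalence from the Thouless formula by expressing each of $L$ and $N$ as a singular integral transform of the other, and then to invoke the Goldstein--Schlag lemma (cited above) to transfer the $(\gamma,\beta)$-log-H\"older modulus with $\gamma > 1$ in both directions.

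First, I would recast the Thouless formula in a form where the Hilbert transform is visible. Since the IDS $N$ is non-decreasing, bounded between $0$ and $1$, and takes its terminal values off a compact interval containing the spectrum, the function
\[
m(z) := \int_\R \log(z - E')\,dN(E')
\]
defines a Herglotz function holomorphic on the upper half-plane, whose boundary values on $\R$ satisfy $\mathrm{Re}\, m(E+i0) = L(E)$ and $\mathrm{Im}\, m(E+i0) = \pi\bigl(1 - N(E)\bigr)$ (up to additive constants arising from the support of $N$). Thus $L$ and $\pi N$ are, up to a harmless affine piece on a compact set, boundary values of conjugate harmonic functions, and so are Hilbert transforms of each other. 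Equivalently, integrating by parts in the Thouless formula and taking a principal value gives
\[
L(E) = \mathrm{p.v.}\int_\R \frac{N(E') - N(E)}{E - E'}\,dE' \; + \; \text{bounded smooth terms},
\]
and one has a companion expression recovering $N$ from $L$ via the inverse Hilbert transform.

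Second, with this formulation in hand, I would apply Goldstein--Schlag's Lemma 10.3 in both directions. That lemma asserts that a singular integral operator of Cauchy/Hilbert type preserves a modulus of continuity $\omega$ provided $\omega$ is strictly stronger than $\log$-H\"older; the $(\gamma,\beta)$-log-H\"older moduli with $\gamma > 1$ are precisely of this admissible type (this is exactly the regime the paper carves out just before the proposition statement). Feeding an IDS with modulus $\omega_{\gamma,\beta}$ into the Hilbert transform therefore produces an LE with the same modulus, up to a larger constant $C$; conversely, feeding an LE with that modulus into the inverse Hilbert transform yields the same for $N$. Since the operator in both directions has the same Cauchy-kernel structure, the two implications are symmetric.

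The main obstacle, and the only place genuine care is required, is matching the precise form of the Thouless transform to the hypotheses of Goldstein--Schlag. One must justify the integration by parts despite the mere monotonicity of $N$ (handled by approximating $dN$ by smooth measures), control the contribution of the logarithmic kernel away from its singularity (absorbed into a Lipschitz correction using compact support of $dN$), and ensure that the strict inequality $\gamma > 1$ is used to land inside the lemma's permissible class of moduli, as $\gamma = 1$ would correspond exactly to the log-H\"older borderline that the lemma excludes. Once these bookkeeping points are settled, the equivalence follows immediately.
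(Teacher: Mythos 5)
Your proposal is correct and follows essentially the same route as the paper: the paper's argument is precisely the observation that the Thouless formula realizes $L$ and $N$ as Hilbert-transform conjugates of one another, and then cites Goldstein--Schlag's Lemma 10.3 to transfer the modulus of continuity in both directions, with the restriction $\gamma>1$ placing $(\gamma,\beta)$-log-H\"older inside the admissible class of that lemma. You have simply spelled out the Herglotz/boundary-value mechanism and the bookkeeping around integration by parts in more detail than the paper, which dispatches the proposition in the preceding paragraph without a separate proof environment.
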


However, the mere log-H\"older continuity of the IDS has no implications on the regularity of the LE (which in general may even be discontinuous).

\bigskip

\section{Main results}

Consider $\Sigma=\{0,a,b,c\}$ and the following Markov chain.

\begin{figure}
	\begin{center}
		\begin{tikzpicture}[->, >=stealth', auto, semithick, node distance=3cm]
		\tikzstyle{every state}=[fill=white,draw=black,thick,text=black,scale=1]
		\node[state]    (0)                     {$0$};
		\node[state]    (c)[above right of=0]   {$c$};
		\node[state]    (a)[below right of=0]   {$a$};
		\node[state]    (b)[below right of=c]   {$b$};
		\path
		(0) edge[loop left]     node{$\frac{1}{2}$}         (0)
		edge[bend right]    node{$\frac{1}{2}$}         (a)
		(c) edge[bend right]    node{$\frac{1}{2}$}         (0)
		edge                node{$\frac{1}{2}$}         (a)
		(b) edge[bend right]    node{$1$}                   (c)
		(a) edge[bend right]    node{$1$}                   (b);
		\end{tikzpicture}
	\end{center}
	\caption{Graph of the Markov chain on $\Sigma$.}
	\label{Sigma Graph}
\end{figure}
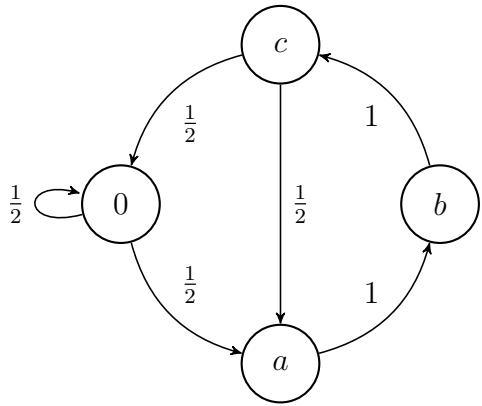

Let $X=\Sigma^{\Z}=\{0,a,b,c\}^{\Z}$, $T\colon X\rightarrow X$ be the shift map and let 
\begin{equation}
\label{prob q}
  q  = \left( \frac{1}{4},  \frac{1}{4},  \frac{1}{4},  \frac{1}{4}\right)    
\end{equation}
be a probability vector on $\Sigma$.
The transition probability  matrix  
\begin{equation}
\label{transition P}
 P =\begin{bmatrix}
\frac{1}{2} & 0 & 0 & \frac{1}{2}\\
\frac{1}{2} & 0 & 0 & \frac{1}{2}\\
0 & 1 & 0 & 0 \\
0 & 0 & 1 & 0
\end{bmatrix}
\end{equation}
is primitive since $P^5>0$ and the vector $q$ is 
  $P$-stationary. 
Therefore the pair $(P,q)$ determines a unique probability measure $\Pp=\Pp_{(P,q)}$ on $X$, and with this measure the map $T\colon X\to X$ 
is a mixing Markov shift.

\bigskip

Consider  now  the function  $\upsilon\colon \Sigma\rightarrow\R$ defined by
\begin{equation}
\label{Schrodinger Cocycle 1}
 \upsilon(a)=\upsilon(c)=-e,\; \upsilon(b)=-e^{-1} \;
\text{ and } \; \upsilon(0)=0. 
\end{equation}
This function determines the locally constant random potential   $\upsilon\colon X\rightarrow\R$ defined by
$\upsilon(x)=\upsilon(x_0)$ for all $x\in X$, which in turn determines the family of Schr\"odinger cocycles
\begin{equation}
\label{Schrodinger Cocycle 2}
A_{  E} (x) := \left[ \begin{array}{ccc} 
 \upsilon (x)  - E  & &  -1  \\
1 & &  \phantom{-}0 \\  \end{array} \right] \in \SL_2 (\R)  
\end{equation}
depending on the parameter $E\in\R$, over the Markov shift above.

Consider the corresponding discrete Schr\"odinger operator
$$[ H_x \, \psi ]_n := - (\psi_{n+1} + \psi_{n-1}) +  v (T^n x) \, \psi_n\, .$$

The following is the first main result of this paper.

\medskip

\begin{theorem}
\label{main theorem}
For any $\beta > 2$, the integrated density of states $N(E)$ of the discrete Schr\"odinger operator corresponding to the random Markov shift defined above is not  $(1, \beta)$-log-H\"older continuous  at $E=0$.
\end{theorem}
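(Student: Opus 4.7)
The plan is to run a Halperin--Simon--Taylor oscillation argument that exploits the Kifer structure embedded at $E=0$. First, I would verify that at $E=0$ the Schr\"odinger cocycle factors through the Kifer $(C,D)$-cocycle: a direct computation gives $A_0(0)=C$ and $A_0(c)\,A_0(b)\,A_0(a)=D$, while the Markov transitions force every admissible sequence to decompose unambiguously into blocks of type $0$ (producing $C$, length $1$) and $abc$ (producing $D$, length $3$), each occurring with conditional probability $\tfrac{1}{2}$ at the branching states $0$ and $c$. Thus at $E=0$ the cocycle is, after a non-uniform time change, precisely the Bernoulli Kifer cocycle with $p=\tfrac{1}{2}$.

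Second, I would quantify the growth at $E=0$. Because the semigroup generated by $C$ and $D$ preserves the union of the two coordinate axes, every partial product $A_0^{(n)}(x)$ is, up to sign, either $\mathrm{diag}(e^{k},e^{-k})$ or an antidiagonal matrix $\begin{pmatrix}0 & -e^{-k}\\ e^{k} & 0\end{pmatrix}$. The integer exponent $k=k_n(x)$ performs a simple random walk on $\Z$: each $D$-letter shifts $k$ by $+1$ in diagonal mode and by $-1$ in antidiagonal mode, while each $C$-letter toggles the mode. Since the parity of the $C$-count is unbiased under the Bernoulli-$\tfrac12$ block process, $|k_n(x)|\sim\sqrt{n}$ for $\Pp$-typical $x$, and hence $\|A_0^{(n)}(x)\|=e^{|k_n(x)|}=e^{O(\sqrt{n})}$.

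Third, I would convert this subexponential growth into a lower bound on the IDS. Heuristically, the natural matching between energy and length comes from $|E|\,\|A_0^{(n)}\|^{2}\sim 1$, which, together with $\|A_0^{(n)}\|^{2}=e^{2|k_n|}$, produces the scale $n=n(E)\sim(\log\tfrac{1}{|E|})^{2}$. Over a length-$n(E)$ window the perturbation from $E=0$ to $\pm|E|$ accumulates a Pr\"ufer-angle increment of order $\pi$ on a positive-probability set of configurations, hence (by Sturm oscillation) yields at least one eigenvalue of $H_{x}^{(n(E))}$ in $[-|E|,|E|]$. Using the primitivity of $P$ to extract $\gtrsim L/n(E)$ essentially independent such subwindows inside a truncation of size $L$, one concludes
$$N(|E|)-N(-|E|)\ \gtrsim\ \frac{1}{n(E)}\ \gtrsim\ \frac{1}{(\log\tfrac{1}{|E|})^{2}}.$$
Since $(\log\tfrac{1}{|E|})^{-2}$ dominates $(\log\tfrac{1}{|E|})^{-\beta}$ as $|E|\to 0$ for every $\beta>2$, this lower bound is incompatible with $(1,\beta)$-log-H\"older continuity of $N$ at $E=0$.

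The main technical obstacle I expect is the third step: one must couple the exponent random walk $k_n(x)$ with the Pr\"ufer-angle dynamics at $E\neq 0$ and verify that, on a positive-measure set of admissible sequences, the accumulated angular derivative $\partial_{E}\theta_{n(E)}(E;x)$ is large enough to force an eigenvalue to cross $[-|E|,|E|]$. Both distinctive features of the Kifer example---its zero Lyapunov exponent and the failure of strong irreducibility at $E=0$---enter essentially here, in agreement with the observation following the main results that it is the \emph{simultaneous} failing of these two hypotheses that causes the breakdown of regularity (compare Proposition~\ref{Lipschitz prop}).
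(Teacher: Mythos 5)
Your first two steps match the paper's reasoning exactly: the factorization at $E=0$ through the Kifer matrices (the paper's Lemma~\ref{Ell and Hyp}), the diagonal/antidiagonal normal form of partial products with an exponent $k_n$ performing a $\pm 1$ random walk while $C$-letters toggle the mode (the paper's Propositions~\ref{product matrix lemma} and~\ref{process rules}), and the resulting $\sqrt{n}$ growth (the paper's CLT Lemma~\ref{CLT main lemma}). Your final scaling, $N(|E|)-N(-|E|)\gtrsim(\log\tfrac{1}{|E|})^{-2}$, is also the same as the paper's, and it does indeed defeat $(1,\beta)$-log-H\"older continuity precisely when $\beta>2$.

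The genuine gap is in your third step, and you flag it yourself. You propose to convert the $\sqrt{n}$-growth into eigenvalues of the truncation $H_x^{(n)}$ via a Pr\"ufer-angle / Sturm oscillation argument, matching $|E|\,\|A_0^{(n)}\|^2\sim 1$; but you never supply a mechanism to guarantee that the Pr\"ufer phase actually sweeps out $\sim\pi$ as $E$ ranges over $[-|E|,|E|]$ on a set of positive probability, and you acknowledge this as the main technical obstacle. The paper sidesteps Pr\"ufer angles entirely and instead uses Temple's inequality (Lemma~\ref{temple ineq}) with \emph{explicit} trial functions. The key construction (Proposition~\ref{quasi ef}) picks two consecutive admissible blocks $w_1$ and $w_2$, each producing a diagonal matrix $\diag(e^{k_i},e^{-k_i})$ with $k_1,k_2\geq K$, separated by a single $0$-symbol. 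Starting the transfer iteration from $(\psi_0,\psi_{-1})=(e^{-k_1},0)$, one sees that $\psi$ grows to size $1$ in the middle of the window and decays back down to $e^{-k_2}$ at the right end; hence $\|H_w\psi\|\leq\sqrt{2}\,e^{-K}$ while $\|\psi\|\geq 1$. By the CLT lemma plus the renewal structure of the Markov chain, the event that a window of length $2l+3$ carries such a $\Cscr_l$-configuration has probability bounded below (Proposition~\ref{CLT main proposition} and the $\Pp(\Cscr_l)>4/625$ estimate). Temple's inequality then directly yields $\gtrsim m$ eigenvalues of $H_x^{(L)}$ in $[-\sqrt{2}e^{-K_l},\sqrt{2}e^{-K_l}]$ with $L=m(2l+3)$, so that $N(\sqrt{2}e^{-K_l})-N(-\sqrt{2}e^{-K_l})\gtrsim 1/l$. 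Notice also that the matching scale here is $|E|\sim\|A^{(l)}\|^{-1}$, not $\|A^{(l)}\|^{-2}$, because the trial function is concentrated near the middle of the window where $|\psi|\sim 1$; the final exponent on $\log(1/|E|)$ is still $2$ so your conclusion survives, but it is worth being aware that the naive Halperin heuristic is off by a constant in this construction. In short: your plan has the right structural ingredients and correct scaling, but the crucial step that produces eigenvalues is only sketched, and the paper's Temple-inequality route with explicit quasi-eigenfunctions is both simpler and avoids the unresolved Pr\"ufer coupling you point to.
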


\bigskip

Recall that W. Craig B. Simon~\cite{CS83} established log-H\"older continuity of the IDS in the general setting of ergodic Schr\"odinger operators. W. Craig~\cite{Craig}, J. P\"oschel~\cite{Poschel} and more recently H. Kr\"uger and Z. Gan~\cite{Kruger-Zheng}  
constructed examples showing that this result is optimal in the setting of Schr\"odinger operators with limit periodic potentials. By a result of A. Avila~\cite{Avila-lp}, (non periodic) limit periodic potentials can be obtained  by sampling a continuous function along the orbits of a {\em minimal translation} of a Cantor group.   Finally, we were made aware of the work in progress~\cite{ALSZ} by A. Avila, Y. Last, M. Shamis and Q. Zhou, where log-H\"older is proven to be the optimal modulus of continuity for cocycles over a torus translation. 

\medskip

Theorem~\ref{main theorem}  shows that the log-H\"older continuity of the IDS obtained by Craig and Simon is nearly optimal at the other end of ergodic behavior, namely for Schr\"odinger operators with {\em random} potentials.    

\medskip

Using the above considerations regarding the transfer of a modulus of continuity via the Thouless formula, we derive the following about the regularity of the LE for random Bernoulli cocycles.

\begin{theorem}
\label{main corollary}
Consider the random Bernoulli cocycle~\eqref{C & D def} with probabilities $(\frac{1}{2},\frac{1}{2})$.
For any $\gamma>1$, the Lyapunov exponent is not $(\gamma, \beta)$-log-H\"older continuous  at this cocycle. In particular, it is not weak-H\"older or H\"older continuous.

\end{theorem}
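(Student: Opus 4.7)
The plan is to reduce the Bernoulli Kifer cocycle to the Markov Schr\"odinger cocycle studied in Theorem \ref{main theorem}, and then invoke Proposition \ref{MOC LE<=> IDS} to obtain a contradiction with that theorem.

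First I would verify two algebraic identities at $E=0$: namely $A_0(0)=C$ and $A_c(0)\,A_b(0)\,A_a(0)=D$. Combined with the structure of the Markov chain in Figure \ref{Sigma Graph}, these identities show that trajectories decompose canonically into i.i.d.\ blocks of length $1$ (the singleton $\{0\}$) or length $3$ (the triple $abc$), each with probability $1/2$: transitions from the states $0$ and $c$ are Bernoulli with weights $(1/2,1/2)$, while the subpath $a\to b\to c$ is deterministic. The associated block cocycle is generated by the matrices $\tilde C(E) := A_0(E)$ and $\tilde D(E) := A_c(E)\,A_b(E)\,A_a(E)$; the map $E\mapsto (\tilde C(E),\tilde D(E))$ is real-analytic with value $(C,D)$ at $E=0$. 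A standard law-of-large-numbers argument on the block lengths (average length $2$), together with control of the partial initial and terminal blocks, then yields the time-change identity
\begin{equation*}
L_{\mathrm{Markov}}(E) \;=\; \tfrac{1}{2}\, L_{\mathrm{Bernoulli}}\bigl(\tilde C(E),\tilde D(E);\tfrac12,\tfrac12\bigr).
\end{equation*}

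Next I would argue by contraposition. Suppose the Bernoulli LE, viewed as a function of the matrix pair with probabilities $(1/2,1/2)$ in the uniform matrix norm, were $(\gamma,\beta)$-log-H\"older continuous at $(C,D)$ for some $\gamma>1$ and some $\beta\ge 1$. Composing with the locally Lipschitz embedding $E\mapsto(\tilde C(E),\tilde D(E))$ makes $E\mapsto L_{\mathrm{Markov}}(E)$ itself $(\gamma,\beta)$-log-H\"older at $E=0$, and Proposition \ref{MOC LE<=> IDS} then transfers this modulus to the IDS $N(E)$ at $E=0$. However, a direct asymptotic comparison shows that whenever $\gamma>1$ one has $e^{-\beta(\log\log 1/r)^{\gamma}} \ll e^{-\beta'(\log\log 1/r)}$ as $r\to 0^+$ for every fixed $\beta'\ge 1$, so $(\gamma,\beta)$-log-H\"older continuity is strictly stronger than $(1,\beta')$-log-H\"older continuity for any $\beta'$; choosing $\beta'>2$ then contradicts Theorem \ref{main theorem}.

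The main technical obstacle is the Markov-to-Bernoulli block reduction: one needs to check both that the block factorization is measurably consistent with the Markov measure $\Pp_{(P,q)}$ on the two-sided shift, and that the modulus of continuity passes without loss through the Lipschitz reparametrization $E\mapsto(\tilde C(E),\tilde D(E))$ and the time-change by the factor $1/2$. Everything else is routine: the algebraic identities are a direct computation, the asymptotic comparison of moduli is elementary, and Proposition \ref{MOC LE<=> IDS} supplies the Hilbert-transform content.
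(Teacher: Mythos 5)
Your proposal is correct and follows essentially the same route as the paper: induce the Markov shift on the states $\{0,a\}$, identify the return-time cocycle with a Bernoulli cocycle $(\tilde C(E),\tilde D(E);\tfrac12,\tfrac12)$ analytic in $E$ with value $(C,D)$ at $E=0$, apply Abramov's formula to relate the two Lyapunov exponents, and transfer the modulus of continuity through Proposition~\ref{MOC LE<=> IDS} to contradict Theorem~\ref{main theorem}. As a minor remark, your constant $\tfrac12$ in the time-change formula is in fact the correct one (since $\Pp(\Ascr)=q_0+q_a=\tfrac12$, giving mean return time $2$), whereas the paper writes $\tfrac23$ and $\tfrac32$; this has no bearing on the argument since only the positivity of the proportionality constant matters.
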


\section{A Probabilistic Lemma }

The purpose of this section is to prove the following key lemma.

\begin{lemma} 
\label{CLT main lemma}
Consider the matrices $C$ and $D$ defined in~\eqref{C & D def}.
There exist $n_0\in\N$  such that for all $n\geq n_0$, the event that a random i.i.d. sequence $\{A_j\}_j$ 
with $\Pp[  A_j=\Hyp   ] = \Pp[  A_j=\Ell  ]=\frac{1}{2}$
satisfies  
$$A_n\,\cdots \, A_1\,A_0= \pm\,\begin{bmatrix}
e^\kappa & 0 \\ 0 & e^{-\kappa}
\end{bmatrix} \; \text{ with }\; 
  \kappa\geq \frac{1}{10}\,\sqrt{n}  $$
 has probability $> \frac{4}{10}$.
\end{lemma}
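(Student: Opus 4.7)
The plan is to reduce the statement to an elementary central-limit calculation by using the non-trivial relation $CDC^{-1}=D^{-1}$ to collapse the matrix product to a scalar, and then finding a substitution that turns that scalar into a simple random walk.

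\textit{Step 1 (algebraic reduction).} The matrices satisfy $CDC^{-1}=D^{-1}$ (equivalently $CD=D^{-1}C$) and $C^{2}=-I$. Setting $Y_{j}:=\one[A_{j}=C]$, $n_{C}:=\sum_{j=0}^{n}Y_{j}$ and $c_{j}:=\#\{k>j\colon A_{k}=C\}$, I would push every $C$ to the right in the product $A_{n}\cdots A_{1}A_{0}$; each $D$ at position $j$ is passed by the $c_{j}$ $C$'s originally to its left and hence becomes $D^{(-1)^{c_{j}}}$. Since the remaining $D^{\pm1}$ factors commute,
$$A_{n}\cdots A_{1}A_{0}\;=\;D^{S}\,C^{n_{C}},\qquad S:=\sum_{j\colon A_{j}=D}(-1)^{c_{j}}.$$
Because $C^{2}=-I$, the factor $C^{n_{C}}$ lies in $\{\pm I\}$ exactly when $n_{C}$ is even, and in that case $A_{n}\cdots A_{0}=\pm\diag(e^{S},e^{-S})$, which is of the required form with $\kappa=|S|$.

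\textit{Step 2 (telescoping into a Rademacher walk).} Next I would introduce $\epsilon_{j}:=1-2Y_{j}\in\{\pm1\}$, which are iid uniform, and the tail products $\eta_{j}:=\prod_{k=j}^{n}\epsilon_{k}$ (with the convention $\eta_{n+1}:=1$). Using $1-Y_{j}=(1+\epsilon_{j})/2$ and $(-1)^{c_{j}}=\eta_{j+1}$, each summand collapses to
$$(1-Y_{j})(-1)^{c_{j}}\;=\;\tfrac{1+\epsilon_{j}}{2}\,\eta_{j+1}\;=\;\tfrac{\eta_{j}+\eta_{j+1}}{2},$$
so the sum telescopes to
$$S\;=\;\sum_{j=0}^{n}\tfrac{\eta_{j}+\eta_{j+1}}{2}\;=\;\sum_{j=1}^{n}\eta_{j}+\tfrac{\eta_{0}+1}{2}.$$
The key observation is that $(\epsilon_{0},\ldots,\epsilon_{n})\mapsto(\eta_{0},\ldots,\eta_{n})$ is a bijection of $\{\pm1\}^{n+1}$ (its inverse is $\epsilon_{j}=\eta_{j}\eta_{j+1}$), hence the $\eta_{j}$'s are themselves iid Rademacher. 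Moreover $\eta_{0}=\prod_{j=0}^{n}\epsilon_{j}=(-1)^{n_{C}}$, whence the event $\{n_{C}\text{ even}\}=\{\eta_{0}=+1\}$ is independent of $(\eta_{1},\ldots,\eta_{n})$.

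\textit{Step 3 (central limit theorem).} Conditionally on $\eta_{0}=+1$ we have $S=1+S_{n}^{\ast}$, where $S_{n}^{\ast}:=\sum_{j=1}^{n}\eta_{j}$ is a symmetric $\pm1$ random walk of $n$ steps; by the classical CLT $S_{n}^{\ast}/\sqrt{n}\to\mathcal{N}(0,1)$, so
$$\Pp\!\bigl[\,|S_{n}^{\ast}+1|\ge\tfrac{\sqrt{n}}{10}\,\bigr]\;\xrightarrow[n\to\infty]{}\;\Pp\!\bigl[\,|\mathcal{N}(0,1)|\ge\tfrac{1}{10}\,\bigr]\;=\;2\bigl(1-\Phi(\tfrac{1}{10})\bigr)\;>\;\tfrac{9}{10}.$$
Multiplying by $\Pp[\eta_{0}=+1]=\tfrac{1}{2}$ produces a limit strictly greater than $\tfrac{9}{20}>\tfrac{4}{10}$, which furnishes the required $n_{0}$.

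The creative step is the substitution $\eta_{j}:=\prod_{k\ge j}\epsilon_{k}$ in Step 2. It simultaneously converts the dependent sign structure $(-1)^{c_{j}}$ coming from the commutation relation $CD=D^{-1}C$ into a sum of iid Rademachers, and repackages the parity of $n_{C}$ as the first coordinate $\eta_{0}$, which is then independent of the remaining walk $S_{n}^{\ast}$. Once this is in place the estimate is a routine application of the CLT.
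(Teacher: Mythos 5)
Your Steps 1 and 2 are correct and give a genuinely different, and shorter, route than the paper's. The paper tracks a bivariate Markov chain $(\eta_n,S_n)$ through explicit transition rules, derives a Pascal-type recursion for the counts $a(n,i)$, and identifies them with binomial coefficients; you instead collapse the product algebraically via $CD=D^{-1}C$ and then use the substitution $\eta_j=\prod_{k\ge j}\epsilon_k$ to convert the dependent signs $(-1)^{c_j}$ into an i.i.d.\ Rademacher walk, neatly isolating the diagonality of the product as the single coordinate $\eta_0$, independent of the rest. Both routes arrive at binomial tails, but yours avoids the recursion entirely.

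There is, however, a genuine error at the end of Step~1 that propagates into Step~3. When $n_C$ is even the product is $\pm D^S=\pm\,\diag(e^S,e^{-S})$, and this is of the required form $\pm\,\diag(e^\kappa,e^{-\kappa})$ only with $\kappa=S$, \emph{not} $\kappa=|S|$: for $S<0$ the matrix $\diag(e^S,e^{-S})$ has its small entry in the upper-left slot and is not equal to $\pm\,\diag(e^{|S|},e^{-|S|})$. So the event in the Lemma is the one-sided event $\{S\ge\tfrac{1}{10}\sqrt n\}$, not the two-sided $\{|S|\ge\tfrac{1}{10}\sqrt n\}$. Redoing your Step~3 with the one-sided event gives
$$ \Pp[\eta_0=+1]\cdot\Pp\bigl[\,S_n^*+1\ge\tfrac{1}{10}\sqrt n\,\bigr]\ \longrightarrow\ \tfrac12\bigl(1-\Phi(\tfrac1{10})\bigr)\approx 0.23 , $$
which falls short of the $\tfrac{4}{10}$ asserted in the statement, so the argument as written does not reach the stated bound. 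For what it is worth, a direct enumeration at small $n$ confirms the one-sided probability really is near $0.23$ rather than $0.46$, and the paper's own computation at this step appears to use a normalization $2^{-(n-1)}$ where it should be $2^{-n}$; the constant $\tfrac{4}{10}$ in the Lemma is thus likely a factor-of-two overstatement. Since the Lemma is only used downstream to produce \emph{some} fixed positive probability, this does not affect the rest of the paper, but you should either prove the corrected one-sided bound $>\tfrac{2}{10}$ or flag that the statement's constant needs adjusting, rather than recover $\tfrac{4}{10}$ via the unjustified identification $\kappa=|S|$.
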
 

This  will be proved at the end of this section.

\bigskip

\begin{proposition}
\label{product matrix lemma}
The matrices $C$ and $D$ satisfy
\begin{enumerate}
\item $\Ell^{-1}=-\Ell$ and  $\Ell^{2n}=\pm I$, $\Ell^{2n-1}=\pm \Ell$ for all $n\in\Z$.
\item $\Ell\,\Hyp\,\Ell=-\Hyp^{-1}$,
\item Any product $A_n\, \ldots\, A_1\, A_0$ with factors
$A_j\in \{\Hyp,\Ell\}$ has the form
	\begin{enumerate}
		\item $\pm\,\begin{bmatrix}
			 e^{\kappa} & 0 \\ 
			 0 & e^{-\kappa} 
			\end{bmatrix}$  when the number of factors $A_j=\Ell$ is even,
		\item $\pm\,\begin{bmatrix}
			  0 & -e^{\kappa} \\ 
			  e^{-\kappa} & 0
			\end{bmatrix}$   when the number of factors $A_j=\Ell$ is odd.
	\end{enumerate}
\end{enumerate}
\end{proposition}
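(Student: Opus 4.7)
Parts (1) and (2) reduce to direct $2\times 2$ matrix computations. One checks that $\Ell^{2}=-I$ by inspection, which immediately gives $\Ell^{-1}=-\Ell$, and by induction in $n\in\Z$ yields $\Ell^{2n}=(-I)^{n}=\pm I$ and $\Ell^{2n-1}=\pm\Ell$. For (2), I would compute in two steps: first
$\Ell\,\Hyp=\begin{bmatrix}0 & -e^{-1}\\ e & 0\end{bmatrix}$,
and then $(\Ell\,\Hyp)\,\Ell=\begin{bmatrix}-e^{-1} & 0\\ 0 & -e\end{bmatrix}=-\Hyp^{-1}$.

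\textbf{Proof strategy for (3).} I would proceed by induction on $n\geq 0$, the number of factors, simultaneously for both forms. The base case $n=0$ reads either $\Hyp=\begin{bmatrix}e & 0\\ 0 & e^{-1}\end{bmatrix}$ (zero copies of $\Ell$, which is even, giving form (a) with $\kappa=1$) or $\Ell=\begin{bmatrix}0 & -1\\ 1 & 0\end{bmatrix}$ (one copy of $\Ell$, odd, giving form (b) with $\kappa=0$).

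For the inductive step, assume the partial product $B:=A_{n-1}\cdots A_{0}$ belongs to one of the two families and examine the four cases for multiplying on the left by $A_{n}\in\{\Hyp,\Ell\}$. Because $\Hyp$ is diagonal, left multiplication by $\Hyp$ preserves the shape of both families and only shifts $\kappa\mapsto \kappa+1$; it also preserves the parity of the number of $\Ell$-factors, so (a) stays in (a) and (b) stays in (b). Left multiplication by $\Ell$ swaps the two shapes: applying $\Ell=\begin{bmatrix}0 & -1\\ 1 & 0\end{bmatrix}$ to a diagonal matrix $\pm\,\mathrm{diag}(e^{\kappa},e^{-\kappa})$ produces $\pm\begin{bmatrix}0 & -e^{-\kappa}\\ e^{\kappa} & 0\end{bmatrix}$, which is form (b) with parameter $-\kappa$, while applying $\Ell$ to $\pm\begin{bmatrix}0 & -e^{\kappa}\\ e^{-\kappa} & 0\end{bmatrix}$ yields $\mp\,\mathrm{diag}(e^{-\kappa},e^{\kappa})$, which is form (a) with parameter $-\kappa$. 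In both of these $\Ell$-cases the parity of the number of $\Ell$-factors also flips, so the stated correspondence between parity and form is preserved. Note the sign $\pm$ in front absorbs the overall sign changes that appear (for instance in the second $\Ell$-case), which is why the families only need to be specified up to an overall $\pm$.

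\textbf{Main obstacle.} There is no conceptual difficulty; the proposition is essentially a normal-form statement for the monoid generated by $\Ell$ and $\Hyp$ subject to the relations $\Ell^{2}=-I$ and $\Ell\,\Hyp\,\Ell=-\Hyp^{-1}$. The only care required is bookkeeping the signs and the flip $\kappa\mapsto -\kappa$ each time an $\Ell$-factor is introduced, together with correctly tracking the parity of the number of $\Ell$'s. Once the four cases of the induction step are laid out, the conclusion is immediate.
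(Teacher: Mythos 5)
Your computations for (1) and (2) are correct and essentially the same as the paper's, differing only in the presentation of the arithmetic (the paper derives (2) from $DCDC=-I$; you compute $CD$ and then $(CD)C$, which is the same fact).

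For (3) you take a genuinely different, and arguably cleaner, route. The paper argues by first collapsing even-length runs of consecutive $C$'s to signs, then invoking the commutation relation $C D^{l}=D^{-l}C$ (which follows from your items (1) and (2)) to slide all remaining $C$'s to one side, arriving at a normal form $\pm D^{\kappa}$ or $\pm C D^{\kappa}$; the parity statement then follows by bookkeeping how many $C$'s were cancelled. You instead run a straightforward induction on the number of factors, checking the four cases of left-multiplication by $D$ or $C$ on each of the two families. I verified all four cases: left-multiplication by $D$ fixes the shape and increments $\kappa$; left-multiplication by $C$ swaps the two shapes, sends $\kappa\mapsto-\kappa$, and (in the case acting on form (b)) flips the overall sign, which is absorbed by the $\pm$. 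The parity tracking is also correct. Your argument has the advantage of being entirely mechanical — each step is a single $2\times2$ multiplication — and it does not require first establishing or manipulating the commutation identities; the paper's reduction is somewhat terser once those identities are in hand, but requires the reader to see that the cancellation-plus-commutation scheme actually terminates in the claimed normal form. Both are complete proofs; yours is the more elementary.

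One small presentational remark: in your base case you call it ``$n=0$'' and then list two sub-cases ($A_0=D$ and $A_0=C$), which is correct since the product $A_n\cdots A_0$ with $n=0$ has one factor; just make sure the phrasing is unambiguous, since ``zero factors'' would be the empty product $I$, which is a degenerate but harmless base case (form (a) with $\kappa=0$, zero $C$'s).
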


\begin{proof}  
 $\Ell^2 = -I$
                 so, $\Ell^{2n}=\pm I$ depending on whether n is even or odd, which proves (1).  
 
Item (2)  follows from the fact that $\Hyp\Ell \Hyp \Ell=-I$. 
    
Let us prove (3). Given a product $A_n\, \ldots\, A_1\, A_0$  substitute any even length list of consecutive $\Ell$'s in it by a sign $\pm 1$, to get a product of the form:
\begin{equation}
    \label{eq 1}  \pm...\Ell\Hyp^m\Ell\Hyp^{m'}\Ell\Hyp^{m''}\Ell...
\end{equation}
  
Since there was an even number of $\Ell$'s cancellations,  the number of factors $A_j=\Ell$ has the same parity as the number of $\Ell$'s in~\eqref{eq 1}. Items (1) and (2) imply that $\Ell\,\Hyp^l = \Hyp^{-l}\,\Ell$ for all $l\in\Z$. 

Making use of these commutation relations combined with the identity $C^2=-I$, we can transform~\eqref{eq 1} into either $\pm \Hyp^\kappa$ or $\pm \Ell\,\Hyp^\kappa$, for some $\kappa\in\Z$. 
Since the number of  $\Ell$'s cancellations is even, the product~\eqref{eq 1} is  equal to $\pm \Hyp^\kappa$ if and only if the number of $\Ell$'s in it is even. This proves item (3).
\end{proof}

\bigskip

Consider a random i.i.d. process $\{A_n\}_{n\geq 0}$,
 such that  for all $n\geq 0$,
$$ \Pp[ \, A_n=\Hyp \, ] = \Pp[ \, A_n=\Ell \, ]=\frac{1}{2}.$$
Define the product process
\begin{align*}
 M_n  &:=  A_{n-1}\, \ldots\, A_1\, A_0  .
\end{align*}

By Proposition~\ref{product matrix lemma},
the process $M_n$ takes value in the union of the following two
 disjoint classes of matrices.
\begin{align*}
\Mscr_{+} &:=\left\{ \pm \begin{bmatrix}
e^\kappa & 0 \\ 0 & e^{-\kappa}
\end{bmatrix} \colon \kappa\in\Z \right\}\\
\Mscr_{-} &:=\left\{ \pm \begin{bmatrix}
0 & -e^{\kappa} \\ e^{-\kappa} & 0
\end{bmatrix} \colon \kappa\in\Z \right\}\\
\end{align*}
By the same proposition we have
$$ C\,\Mscr_{+}\subset \Mscr_{-},\quad C\,\Mscr_{-}\subset \Mscr_{+},\quad 
D\,\Mscr_{+}\subset \Mscr_{+} \; \text{ and }\;  D\,\Mscr_{-}\subset \Mscr_{-} .$$
Consider the sign valued process $\{\eta_n\}_n$
$$ \eta_n:= \left\{ 
\begin{array}{ccc}
+ &\text{ if } & M_n\in\Mscr_{+}\\
- &\text{ if } & M_n\in\Mscr_{-}\\
\end{array}
\right. $$
as well as the real valued process $\{S_n\}_n$ characterized by
$$ M_n = \left\{ \begin{array}{ccc}
 \pm \begin{bmatrix}
e^{S_n} & 0 \\ 0 & e^{-S_n} 
\end{bmatrix}  & \text{ if } & M_n\in\Mscr_{+} \\
\smallskip \\
 \pm \begin{bmatrix}
 0 & -e^{S_n} \\ e^{-S_n} & 0 
\end{bmatrix}  & \text{ if } & M_n\in\Mscr_{-}
\end{array}
\right.   \;.$$

\begin{proposition} \label{process rules}
The processes $\{\eta_n\}$ and $\{S_n\}$ relate to $\{A_n\}$ in the following way:
\begin{enumerate}
\item   $\eta_{n-1}=+$,\;  $A_n=C$  \; $\Rightarrow$\;  $\eta_{n}=-$,\; $S_n=S_{n-1}$,
\item   $\eta_{n-1}=-$,\; $A_n=C$  \; $\Rightarrow$\; $\eta_{n}=+$,\; $S_n=S_{n-1}$,
\item   $\eta_{n-1}=+$,\; $A_n=D$  \; $\Rightarrow$\; $\eta_{n}=+$,\; $S_n=S_{n-1}+1$,
\item   $\eta_{n-1}=-$,\; $A_n=D$  \; $\Rightarrow$\; $\eta_{n}=-$,\; $S_n=S_{n-1}-1$.
\end{enumerate}
\end{proposition}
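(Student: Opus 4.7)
My plan is a direct case-by-case verification: each of the four implications follows from a single $2\times 2$ matrix multiplication using the explicit normal forms for $M_{n-1}$ provided by Proposition~\ref{product matrix lemma}(3). Knowing the shape of $M_{n-1}$ from its class $\eta_{n-1}\in\{+,-\}$ and the exponent $S_{n-1}$, one just multiplies by the appropriate factor $C$ or $D$ and reads off the class $\eta_n$ and exponent $S_n$ of the result.

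For the two cases with new factor $D=\diag(e,e^{-1})$ (items (3) and (4)), $D$ is diagonal and therefore preserves both $\Mscr_+$ and $\Mscr_-$, so automatically $\eta_n=\eta_{n-1}$. In case (3) one has $M_{n-1}=\pm\diag(e^{S_{n-1}},e^{-S_{n-1}})$, the product with $D$ remains diagonal, and the upper-left entry becomes $e^{S_{n-1}+1}$, giving $S_n=S_{n-1}+1$. In case (4) the exponent $S_{n-1}$ sits in the upper-right entry of the antidiagonal normal form, which the multiplication by $D$ scales by $e^{-1}$; the new upper-right entry is $-e^{S_{n-1}-1}$, hence $S_n=S_{n-1}-1$.

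For the two cases with new factor $C$ (items (1) and (2)), the antidiagonal structure together with $C^{2}=-I$ implies that $C$ swaps $\Mscr_+$ and $\Mscr_-$, so $\eta_n=-\eta_{n-1}$. The direct computations
\begin{align*}
\diag(e^{s},e^{-s})\, C &= \begin{bmatrix} 0 & -e^{s} \\ e^{-s} & 0 \end{bmatrix}, \\
\begin{bmatrix} 0 & -e^{s} \\ e^{-s} & 0 \end{bmatrix} C &= -\diag(e^{s},e^{-s})
\end{align*}
show that the exponent is preserved, $S_n=S_{n-1}$, in both cases; the extra minus sign appearing in the second identity is absorbed by the $\pm$ already present in the normal form of $\Mscr_+$.

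There is no conceptual obstacle in this proof; it is an entirely routine verification. The only point demanding some care is identifying the location of the exponent $S_{n-1}$ within the antidiagonal normal form (which is responsible for the asymmetric sign between items (3) and (4)), and keeping track of the overall $\pm$ sign throughout the four computations.
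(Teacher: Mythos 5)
Your proof is correct and is exactly the ``straightforward argument'' the paper has in mind: pick the normal form for $M_{n-1}$ dictated by $\eta_{n-1}$ from Proposition~\ref{product matrix lemma}(3), multiply by $C$ or $D$, and read off the new normal form. One small point worth flagging: you (correctly) multiply the new factor on the \emph{right}, $M_n = M_{n-1}A_n$, which is what makes item~(4) produce $S_n=S_{n-1}-1$ rather than $S_{n-1}+1$; this is the convention implicitly used later (e.g.\ in the recursions~\eqref{Ascr+}--\eqref{Ascr-}, where $\ast$ appends a letter at the end of the word), but it is the opposite of the order in which the paper literally writes the product $M_n := A_{n-1}\cdots A_1 A_0$, so the paper's displayed definition has the factors in the wrong order and your reading is the one that makes the proposition true.
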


\begin{proof}
Straightforward argument.
\end{proof}

Given a set of words $\Ascr\subset \{C,D\}^n$ and a letter $a\in \{C,D\}$ we define
$$ \Ascr\ast a=\{ (w,a)\in \{C,D\}^{n+1}\colon w\in\Ascr \} .$$
Each word $w=(w_0,w_1,\ldots, w_{n-1})\in\{C,D\}^n$ determines the cylinder
$$ C(w):=\{ x\in \{C,D\}^\Z \colon 
x_j=w_j,\; \forall\, j=0,1,\ldots, n-1 \} .$$
Throughout the paper we identify each set $\Ascr\subset \{C,D\}^n$  with the event $\cup_{w\in \mathscr{A}} C(w)$  determined by its words.
Because all words are equi-probable, the probability of $\Ascr$ (regarded as an event) is  $\Pp(\Ascr)=\frac{\#\Ascr}{2^{n} }$.

Define for each pair of integers $(n,i)$,
\begin{align*}
 \Ascr_+(n,i)&:=\{ w\in \{C,D\}^n \colon S_n(w)=i \, \text{ and } \, \eta_n(w)=+ \, \} \\
 \Ascr_-(n,i)&:=\{ w\in \{C,D\}^n \colon S_n(w)=i \, \text{ and } \, \eta_n(w)=- \, \}\\
 \Ascr(n,i)&:=\{ w\in \{C,D\}^n \colon S_n(w)=i    \, \}
\end{align*}
Note that $\Ascr(n,i) =  \Ascr_+(n,i) \cup  \Ascr_-(n,i)$.
Let us write $a_+(n,i):= \# \Ascr_+(n,i)$, $a_-(n,i):= \# \Ascr_-(n,i)$ and $a(n,i):= \# \Ascr(n,i)$.

\begin{proposition} \label{a(n,i) recurrence}
For any pair of  integers $(n,i)$,
\begin{align*}
a(n,i)=a_+(n,i)  & \quad \text{ if }\, n+i \, \text{ is even }\\
a(n,i)=a_-(n,i)  & \quad \text{ if }\, n+i \, \text{ is odd } .\\
\end{align*}
Moreover, the function $(n,i)\mapsto a(n,i)$ is  determined by the recursive relation
\begin{equation}
\label{binomial recursion}
a(n,i)=a(n-1,i-1) + a(n-1, i+1) 
\end{equation} 
and the initial conditions
\begin{equation}
\label{initial conditions}
 a(1,0)=a(1,1)=1 \, \text{ and } \,  a(1,i)=0 \, \text{ for all }\, i\neq 0,1 .
\end{equation} 
For all integers $(n,i)$
\begin{align}
\label{explicit 2i} 
 a(n,2i)=\binom{n-1}{\lfloor \frac{n-1}{2}\rfloor + i}\\
 \label{explicit 2i+1} 
 a(n,2i+1)=\binom{n-1}{\lfloor \frac{n}{2}\rfloor+i} \, .
\end{align}
\end{proposition}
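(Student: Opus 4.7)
The plan is to establish the four claims in turn: the parity dichotomy between $\eta_n$ and $n+i$, the initial conditions, the recursion~\eqref{binomial recursion}, and the closed forms.

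First I would prove by induction on $n$ the strengthened equivalence $\eta_n = (-1)^{n+S_n}$ (identifying the signs $\pm$ with $\pm 1$). The base $(\eta_0,S_0) = (+,0)$ is trivial, and the inductive step is a four-case check against Proposition~\ref{process rules}: in each case $n$ increases by one while $S_n$ changes by $0$, $+1$, or $-1$ in such a way that the parity of $n + S_n$ flips exactly when $\eta$ does. The identities $a(n,i) = a_\pm(n,i)$ in the appropriate parity regimes follow immediately, since any $w$ with $S_n(w) = i$ has $\eta_n(w)$ forced by the parity of $n+i$. The base values $a(1,0) = a(1,1) = 1$ come from directly enumerating the two choices of $A_0$.

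For the recursion, I would split a word $w \in \Ascr(n,i)$ by its last letter, writing $w = (w', w_{n-1})$ with $w'$ of length $n-1$. The case $w_{n-1} = C$ contributes $w' \in \Ascr(n-1,i)$; the case $w_{n-1} = D$ contributes $w' \in \Ascr_+(n-1,i-1)$ or $w' \in \Ascr_-(n-1,i+1)$ according to the sign $\eta_{n-1}(w')$. Assume without loss of generality that $n+i$ is even; the opposite parity case is analogous. The parity dichotomy then empties $\Ascr_-(n-1,i+1)$ and identifies $\Ascr(n-1,i) = \Ascr_-(n-1,i)$ as well as $\Ascr_+(n-1,i-1) = \Ascr(n-1,i-1)$, yielding
\[
a(n,i) = a_-(n-1,i) + a(n-1,i-1).
\]
To reach the stated recursion one still needs the symmetry $a_-(n-1,i) = a_+(n-1,i+1) = a(n-1,i+1)$; this is the main obstacle of the proof. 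I would establish it by an explicit involutive bijection $\Phi \colon \Ascr_-(n-1,i) \to \Ascr_+(n-1,i+1)$ that toggles the last letter $C \leftrightarrow D$. A short case-check based on whether the original last letter is $C$ or $D$ (and hence on the values of $\eta_{n-2}$ and $S_{n-2}$) shows that $\Phi$ lands in the target set and is its own inverse.

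Finally, the closed forms follow by induction on $n$ using the recursion~\eqref{binomial recursion} and Pascal's identity $\binom{m-1}{k} + \binom{m-1}{k+1} = \binom{m}{k+1}$; the only bookkeeping subtlety is the shift $\lfloor n/2 \rfloor = \lfloor (n-2)/2 \rfloor + 1$ that appears when passing from $a(n-1,2i)$ and $a(n-1,2i+2)$ to $a(n,2i+1)$. The base $n = 1$ matches the initial conditions, completing the induction.
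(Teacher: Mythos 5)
Your proof is correct, and although it funnels into the same intermediate identity $a(n,i)=a_-(n-1,i)+a(n-1,i-1)$ (in the even-parity case), it handles the two hinges of the argument differently from the paper. For the parity dichotomy, the paper first writes down the split recursions for $\Ascr_+(n,i)$ and $\Ascr_-(n,i)$ and then deduces the dichotomy by induction; your closed formula $\eta_n=(-1)^{n+S_n}$, checked against the four cases of Proposition~\ref{process rules}, is cleaner and makes the dichotomy a one-line consequence. For the key symmetry $a_-(n-1,i)=a_+(n-1,i+1)$, the paper simply observes that the right-hand sides of the recursions for $\Ascr_-(n,i)$ and $\Ascr_+(n,i+1)$ coincide literally (a short but somewhat opaque algebraic remark); your last-letter-toggle map $\Phi$ establishes the same equality by an explicit bijection, which is more combinatorially transparent and explains "why" the two counts agree. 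Both are sound; the paper's route is shorter on the page, while yours is more self-explanatory and also delivers a fully spelled-out induction (via Pascal's rule) for the closed forms, which the paper only indicates by inspection of the triangle. One small wording caveat: $\Phi$ is an involution of the ambient set $\{C,D\}^{n-1}$, not of $\Ascr_-(n-1,i)$ itself; what your case-check shows is that $\Phi$ exchanges $\Ascr_-(n-1,i)$ and $\Ascr_+(n-1,i+1)$, and hence restricts to a bijection between them, which is all that is needed.
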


\begin{proof}
Note that
$\Ascr(1,0)=\Ascr_-(1,0)=\{ (C)\}$ and  $\Ascr(1,1)=\Ascr_+(1,1)=\{ (D) \}$.
Hence $a(1,0)=a_-(1,0)=1$ and $a(1,1)=a_+(1,1)=1$, while $a(1,i)=a_+(1,i)=a_-(1,i)=0$ for all other $i$.
This proves~\eqref{initial conditions}.

With the notation introduced, from Proposition~\ref{process rules} we get
\begin{align}
\Ascr_+(n,i) = \Ascr_+(n-1,i-1)\ast D \, \sqcup \, \Ascr_-(n-1,i)\ast C  &\quad \text{ if } \, n+i \, \text{ is even }
\label{Ascr+}\\
\Ascr_-(n,i) = \Ascr_+(n-1,i)\ast C \, \sqcup \, \Ascr_-(n-1,i+1)\ast D  &\quad \text{ if } \, n+i \, \text{ is odd } 
\label{Ascr-}
\end{align}
where the symbol $\sqcup$ stands for disjoint union. These identities imply that
\begin{align}
a_+(n,i) = a_+(n-1,i-1) + a_-(n-1,i)  &\quad \text{ if } \, n+i \, \text{ is even }\\
a_-(n,i) = a_+(n-1,i) + a_-(n-1,i+1)   &\quad \text{ if } \, n+i \, \text{ is odd } 
\end{align}
 From the initial conditions we see by induction in $n$  that $a_-(n,i)=0$ and $a(n,i)=a_+(n,i)$  when $n+i$ is even, while
$a_+(n,i)=0$ and $a(n,i)=a_-(n,i)$  when $n+i$ is odd.

From~\eqref{Ascr+} and~\eqref{Ascr-} we get that
$$ a_-(n,i)=a_+(n-1,i)+a_-(n-1,i+1) = a_+(n,i+1)  $$
when $n+i$ is odd and
$a_-(n,i)=0=a_+(n,i+1)$ otherwise.
Therefore, because of these equalities, if $n+i$ is even then
\begin{align*}
a(n,i)  = a_+(n,i) &=
a_+(n-1,i-1) + a_-(n-1,i) \\
&= a_+(n-1,i-1) + a_+(n-1,i+1) \\
&= a(n-1,i-1) + a(n-1,i+1) .
\end{align*}
Similarly, if $n+i$ is odd then 
\begin{align*}
a(n,i)  = a_-(n,i) &=
a_+(n-1,i) + a_-(n-1,i+1) \\
&= a_-(n-1,i-1) + a_-(n-1,i+1) \\
&= a(n-1,i-1) + a(n-1,i+1) .
\end{align*}
This establishes identity~\eqref{binomial recursion}.
\begin{table}
\begin{tabular}{c|cccccccccccc}
	$i$ & $\cdots$ & $-4$ & $-3$ & $-2$ & $-1$ & $0$ & $+1$ & $+2$  & $+3$  & $+4$ & $+5$ & $\cdots$	\\
	\hline\\
	$a(1,i)$ &  & & & & & $1$ & $1$ & & & &\\
	$a(2,i)$  & & & & & $1$ & $1$ & $1$ & $1$  & & & \\
	$a(3,i)$  &   & & & $1$ & $1$ & $2$ & $2$ & $1$  &  $1$ & & \\
	$a(4,i)$  &   & & $1$ & $1$ & $3$ & $3$ & $3$ & $3$  &  $1$ & $1$ & \\
	$a(5,i)$  &   & $1$ & $1$ & $4$ & $4$ & $6$ & $6$ & $4$  &  $4$ & $1$ & $1$ & \\
	& & & &
\end{tabular}
\caption{Pascal's triangle for the numbers $a(n,i)$ }
\label{Pascal Triangle} 
\end{table}

Table~\ref{Pascal Triangle} presents the calculation of the first five rows of $a(n,i)$.
The recursive relations~\eqref{initial conditions} and~\eqref{binomial recursion} show that
both sequences $\{a(n,2i): -n+1 \leq 2 i\leq n  \}$ and 
$\{a(n,2i): -n+1 \leq 2 i+1\leq n  \}$ have exactly $n$ entries matching the binomial numbers 
$\{ \binom{n-1}{k}\colon 0\leq k\leq n-1 \}$.
Formulas~\eqref{explicit 2i} and~\eqref{explicit 2i+1} hold because as $2i$ ranges from $-n+1$ to $n$
the variable $k= \lfloor \frac{n-1}{2}\rfloor+i$ ranges from $0$ to $n$, while  as $2i+1$ ranges from $-n+1$ to $n$
the variable $k= \lfloor \frac{n}{2}\rfloor+i$ ranges from $0$ to $n$.
\end{proof}

\begin{proof}[Proof of Lemma~\ref{CLT main lemma}]
Consider the event $\Escr_n=[\, \eta_n=+,\, S_n\geq \frac{1}{10}\,\sqrt{n} ]$ whose probability we want to estimate,
which can be identified with the following set of words
$$ \Escr_n = \cup_{ i\geq \frac{1}{10}\,\sqrt{n}} \Ascr_+(n,i) =
\cup_{\substack{ i\geq \frac{1}{10}\,\sqrt{n} \\ n+i\,  \text{ even} }} \Ascr(n,i) . $$
By~\eqref{explicit 2i} and Proposition~\ref{a(n,i) recurrence}
\begin{align*}
\# \Escr_n = \sum_{2i\geq \frac{1}{10}\,\sqrt{n}} a(n,2i) &= \sum_{ i\geq  \frac{1}{20}\,\sqrt{n} + \lfloor \frac{n-1}{2}\rfloor } \binom{n-1}{ i}   \; \text{ when } \; n \; \text{ is even}.
\end{align*}
Similarly, by~\eqref{explicit 2i+1} and Proposition~\ref{a(n,i) recurrence}
\begin{align*}
\# \Escr_n =\sum_{2i+1 \geq \frac{1}{10}\,\sqrt{n}} a(n,2i+1) &= \sum_{i \geq \frac{1}{20}\,\sqrt{n}+\lfloor \frac{n}{2}\rfloor-\frac{1}{2}} \binom{n-1}{i} \; \text{ when } \; n \; \text{ is odd}.
\end{align*}
Therefore
\begin{equation}
\label{P(Escr_n)}
\Pp(\Escr_n)=\left\{ 
\begin{array}{ccc}
\frac{1}{2^{n-1}}\sum_{ i\geq  \frac{1}{20}\,\sqrt{n} + \lfloor \frac{n-1}{2}\rfloor } \binom{n-1}{ i} &\text{ if } & n \, \text{ is even }\\
\frac{1}{2^{n-1}}\sum_{i \geq \frac{1}{20}\,\sqrt{n}+\lfloor \frac{n}{2}\rfloor-\frac{1}{2}} \binom{n-1}{i}
&\text{ if } & n \, \text{ is odd }
\end{array}
\right. .
\end{equation} 

We now use the Central Limit Theorem (CLT) to estimate these sums.
Consider an i.i.d. process $\{Y_n\}$ where each $Y_n$ is a Bernoulli random variable with probabilities
$(\frac{1}{2},\frac{1}{2})$, that is $\Pp(Y_n=0)=\Pp(Y_n=1)=\frac{1}{2}$.
All moments of $Y_n$ are equal to $\frac{1}{2}$  and so is its standard deviation $\sigma(Y_n)=\frac{1}{2}$.

Next consider the normalized sum process
$$T_n=\frac{ Y_1+\ldots + Y_n -\frac{n}{2}}{\frac{\sqrt{n}}{2} } =
\frac{2\,( Y_1+\ldots + Y_n) - n}{\sqrt{n}} . $$
The CLT says that $T_n$ converges in distribution to the standard normal $N(0,1)$, whose
cumulative distribution is given by
$$ F(u):= \frac{1}{\sqrt{2\pi}}\, \int_{-\infty}^u  
e^{-\frac{x^2}{2}}\,dx  .
$$
More precisely this means that for all $u\in\R$,
$$ \lim_{n\to \infty} \Pp(T_n\leq u ) = F(u) . $$
On the other hand because the random variables $Y_n$ are Bernoulli,
\begin{align*}
\Pp(T_n\leq u) &= \Pp\left( \sum_{j=1}^n Y_j \leq \frac{n}{2}+u \frac{\sqrt{n}}{2}\right)
=\frac{1}{2^n}\sum_{i\leq \frac{n}{2}+u \frac{\sqrt{n}}{2}}\binom{n}{i} .
\end{align*}
Hence
\begin{equation}
\label{P(Tn-1>1/10)}
\frac{1}{2^{n-1} }\sum_{i> \frac{n-1}{2}+\frac{1} {20} \sqrt{n-1}}\binom{n-1}{i} =\Pp\left( T_{n-1} > \frac{1}{10} \right)
\end{equation}  
converges to $1-F(1/10)\approx 0.460172>0.4$. 
Comparing the sums in~\eqref{P(Escr_n)} and~\eqref{P(Tn-1>1/10)} we conclude that
$\lim_{n\to\infty} \Pp(\Escr_n)= 1-F(1/10)>0.4$.

The Berry-Esseen's Theorem (see~\cite{TaoBook}) implies that there exists   $C<\infty$ such that for all $n\in\N$
$$ \vert \Pp(T_n\leq u ) - F(u)\vert \leq \frac{C}{\sqrt{n}} .$$
 Using this fact, the threshold after which    $\Pp(\Escr_n)>0.4$ holds can be explicitly computed.
\end{proof}

\bigskip

\section{Proof of Theorem~\ref{main theorem}}

B. Halperin gave an example of a random Schr\"odinger cocycle where the IDS (hence also the LE), as a function of the energy $E$, cannot be better than H\"older continuous, with some explicitly given H\"older exponent. Our argument follows closely  the proof of this result given by B. Simon and  M. Taylor in~\cite{SimonTaylor}.
 

\begin{lemma}[Temple's Inequality]
\label{temple ineq}
Let $A$ be a self-adjoint operator in some Hilbert space.
Assume $\{f_j\}_{j=1}^k$ is an orthonormal family such that:
\begin{enumerate}
\item $\langle f_i, A f_j \rangle = \langle A f_i, A f_j \rangle = 0$ for all $i\neq j$ \text{ and}
\item $ \norm{ (A-E_0 ) f_j}\leq \varepsilon$,
\end{enumerate}
for some $\varepsilon>0$ and $E_0\in\R$.
Then $A$ has at least $k$ eigenvalues (counted with multiplicity)
in the range $[E_0-\varepsilon,E_0+\varepsilon]$.
\end{lemma}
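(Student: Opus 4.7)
The plan is to pass to the non-negative self-adjoint operator $B := (A-E_0)^2$ and show, via the min-max principle, that $B$ has at least $k$ eigenvalues (counted with multiplicity) in $[0,\varepsilon^2]$. The spectral theorem then translates these into $k$ eigenvalues of $A$ inside $[E_0-\varepsilon,E_0+\varepsilon]$, since the set of $\la\in\R$ with $(\la-E_0)^2\leq \varepsilon^2$ is precisely $[E_0-\varepsilon,E_0+\varepsilon]$.

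First I would exploit the two orthogonality hypotheses to control the quadratic form of $B$ on the $k$-dimensional subspace $V := \mathrm{span}\{f_1,\ldots,f_k\}$. For an arbitrary $f=\sum_{j=1}^k c_j f_j\in V$, expanding gives
\[
\langle f, Bf\rangle \,=\, \norm{(A-E_0)f}^2 \,=\, \sum_{i,j=1}^{k} \bar c_i c_j \,\langle (A-E_0)f_i,(A-E_0)f_j\rangle .
\]
The cross term equals $\langle Af_i,Af_j\rangle - E_0\langle Af_i,f_j\rangle - E_0\langle f_i,Af_j\rangle + E_0^2\langle f_i,f_j\rangle$, which by hypotheses (1) and the orthonormality of $\{f_j\}$ vanishes for $i\neq j$. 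Hypothesis (2) then yields
\[
\langle f,Bf\rangle \,=\, \sum_{j=1}^{k} \abs{c_j}^2\, \norm{(A-E_0)f_j}^2 \,\leq\, \varepsilon^2 \sum_{j=1}^k \abs{c_j}^2 \,=\, \varepsilon^2 \,\norm{f}^2 .
\]

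Next I would feed this into the variational (min-max) characterization of the spectrum of the self-adjoint, non-negative operator $B$. Since the estimate above holds on a $k$-dimensional subspace, the min-max principle implies that the spectral projection $\ind_{[0,\varepsilon^2]}(B)$ has rank at least $k$; equivalently, $B$ admits at least $k$ eigenvalues, counted with multiplicity, in $[0,\varepsilon^2]$ (in the application to the finite truncation $H^{(n)}_x$ this is automatic, and in general $\ind_{[0,\varepsilon^2]}(B)V$ has dimension at least $\dim V = k$). Finally, the spectral mapping $\la\mapsto (\la-E_0)^2$ identifies the eigenvalues of $B$ in $[0,\varepsilon^2]$ with eigenvalues of $A$ in $[E_0-\varepsilon, E_0+\varepsilon]$, giving the claimed conclusion.

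The only delicate step is the first one: verifying that all the off-diagonal terms in the expansion of $\norm{(A-E_0)f}^2$ collapse. This is precisely what the two orthogonality hypotheses (rather than merely $\langle f_i,Af_j\rangle=0$) are designed to guarantee, and it is this exact cancellation that makes Temple's bound usable in the sequel to produce many eigenvalues of $H^{(n)}_x$ near $E=0$ from suitably constructed quasi-modes $f_j$.
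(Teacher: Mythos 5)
The paper does not actually prove this lemma; it only cites \cite[Lemma A.3.2]{SimonTaylor}, so there is no in-paper argument to compare against. Your proof is a correct direct argument in the setting where the lemma is used, and it follows the standard variational route. The key cancellation is carried out correctly: expanding $\langle f, Bf\rangle = \norm{(A-E_0)f}^2$ for $f = \sum_j c_j f_j$, hypothesis (1) kills $\langle Af_i, Af_j\rangle$ and $\langle f_i, Af_j\rangle$ for $i\neq j$, self-adjointness of $A$ then gives $\langle Af_i, f_j\rangle = \langle f_i, Af_j\rangle = 0$, and orthonormality kills $\langle f_i, f_j\rangle$; so the quadratic form of $B=(A-E_0)^2$ is diagonal on $V=\mathrm{span}\{f_1,\dots,f_k\}$ and bounded by $\varepsilon^2\norm{f}^2$ by (2). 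Feeding this into Courant--Fischer and undoing the square via the spectral theorem gives exactly the stated conclusion in the finite-rank case.

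One small but real imprecision: the word ``equivalently.'' For an arbitrary self-adjoint operator, $\mathrm{rank}\,\mathbf{1}_{[0,\varepsilon^2]}(B)\geq k$ is \emph{not} equivalent to $B$ having $k$ eigenvalues in $[0,\varepsilon^2]$; the spectrum there could be purely continuous. In fact, the lemma as stated (for a general self-adjoint operator on a Hilbert space) is false: take $A$ to be multiplication by the coordinate on $L^2([-1,1])$, $E_0=0$, $\varepsilon=1/2$, $k=1$, $f_1=\sqrt{2}\,\mathbf{1}_{[0,1/2]}$; the hypotheses hold, but $A$ has no eigenvalues at all. Your parenthetical already identifies the fix --- the paper only applies the lemma to the finite-rank truncations $H^{(n)}_x$, where all spectrum is point spectrum --- but it would be cleaner to state the lemma for finite-dimensional $\Hscr$ (or assume discrete spectrum near $E_0$) and then simply assert that the min-max principle yields $\lambda_k(B)\leq\varepsilon^2$. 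With that restriction in place, your argument is complete and rigorous.
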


\begin{proof}
See~\cite[Lemma A.3.2]{SimonTaylor}.
\end{proof}

\begin{definition}
\label{def quasi eigenfunction}
Any vector $f\in H$ such that $\norm{(A-E_0)\,f}\leq \varepsilon $ will be called an  $(\varepsilon,E _0)$-quasi-eigenfunction of the operator $A$.
\end{definition}

\bigskip

Consider now, throughout the rest of this section,
the family of Schr\"odinger cocycles defined in~\eqref{Schrodinger Cocycle 1} and~\eqref{Schrodinger Cocycle 2} over the Markov shift defined by~\eqref{prob q} and~\eqref{transition P}.

\begin{lemma}
\label{Ell and Hyp}
In our example the cocycle satisfies the following relations
\begin{eqnarray*}
    \Ell &=&\begin{bmatrix}
        \upsilon(0) & -1\\
        1            & 0
    \end{bmatrix}\\
    \Hyp &=&\begin{bmatrix}
        \upsilon(c) & -1\\
        1            & 0
    \end{bmatrix} \begin{bmatrix}
                        \upsilon(b) & -1\\
                        1            & 0
                  \end{bmatrix} \begin{bmatrix}
                                    \upsilon(a) & -1\\
                                    1            & 0
                                \end{bmatrix}
\end{eqnarray*}
where $C$ and $D$ are the matrices in~\eqref{C & D def}.
\end{lemma}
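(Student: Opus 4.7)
\medskip

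\noindent\textbf{Proof proposal.} The lemma is a direct computation asserting two things about the Schr\"odinger cocycle~\eqref{Schrodinger Cocycle 2} evaluated at energy $E = 0$: first, that the one-step transfer matrix at the symbol $0$ equals $C$, and second, that the three-step transfer matrix along the forced excursion $a \to b \to c$ of the Markov chain (Figure~\ref{Sigma Graph}) equals $D$.

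For the first identity I would simply substitute $E = 0$ and the value $\upsilon(0) = 0$ from~\eqref{Schrodinger Cocycle 1} into~\eqref{Schrodinger Cocycle 2}; the matrix $\begin{bmatrix} 0 & -1 \\ 1 & 0 \end{bmatrix}$ that falls out coincides with the matrix $C$ of~\eqref{C & D def}. For the second, note from Figure~\ref{Sigma Graph} that the transition structure forces every visit to $a$ to be followed by $b$ and then by $c$, so the relevant three-step product (written in the usual cocycle ordering, with the latest factor on the left) is
\[
A_0(c)\, A_0(b)\, A_0(a) \;=\; \begin{bmatrix} -e & -1 \\ 1 & 0 \end{bmatrix} \begin{bmatrix} -e^{-1} & -1 \\ 1 & 0 \end{bmatrix} \begin{bmatrix} -e & -1 \\ 1 & 0 \end{bmatrix}.
\]
I would multiply the two rightmost factors first; the key cancellation $(-e^{-1})(-e) + (-1)(1) = 0$ annihilates the $(1,1)$-entry of the intermediate product, and a final left-multiplication by $A_0(c)$ then produces the diagonal matrix $\begin{bmatrix} e & 0 \\ 0 & e^{-1} \end{bmatrix}$, which is exactly $D$ of~\eqref{C & D def}.

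I do not anticipate any real obstacle here; this is a routine $2 \times 2$ matrix verification. The actual content of the lemma is not the arithmetic but the observation that the Markov graph of Figure~\ref{Sigma Graph} was designed precisely to realize Kifer's random pair $(C, D)$ by a locally constant Schr\"odinger cocycle: the symbol $0$ contributes a single $C$ factor, while the forced block $abc$ contributes a single $D$ factor. This embedding is what will make the Thouless formula available in the proof of Theorem~\ref{main theorem} and ultimately yield the regularity breakdown of the LE at Kifer's cocycle asserted in Theorem~\ref{main corollary}.
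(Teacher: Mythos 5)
Your computation is correct and matches the paper's proof, which simply says ``Straightforward calculation'': substituting $E=0$ and $\upsilon(0)=0$ gives $C$ directly, and the three-step product with $\upsilon(a)=\upsilon(c)=-e$, $\upsilon(b)=-e^{-1}$ indeed yields $\mathrm{diag}(e,e^{-1})=D$ via the cancellation you identify. Your added remark about the design of the Markov graph realizing Kifer's pair $(C,D)$ is a helpful gloss on the purpose of the lemma, but the verification itself is the same routine arithmetic the paper intends.
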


\begin{proof}
Straightforward calculation.
\end{proof}

A bi-infinite sequence $w=(\,\ldots, w_0, w_1,\ldots, w_n,\ldots\,)\in \Sigma^\Z$ is called  {\em allowable}  if it is a  path in the graph of Figure~\ref{Sigma Graph}. A typical allowable sequence looks like
$$\ldots0\,abc\,abc\,00\,abc\,000\, abc\, 0\ldots$$
Clearly the set of allowable sequence has full probability.

A word $w\in \Sigma^n$ is called \textit{allowable} if it is a path in the graph of Figure~\ref{Sigma Graph}.
We denote by $\Bscr(n)$ the set of all allowable words $w\in \Sigma^n$. A  word $w\in \Sigma^n$ is called \textit{admissible}  if
it is allowable and moreover it only contains full `$abc$' blocks. For instance the word $(00abc0ab)$ is allowable but not admissible.
We denote by $\Ascr(n)$ the set of all admissible words $w\in \Sigma^n$. We write $b(n)=\#\Bscr(n)$ and $a(n)=\# \Ascr(n)$.
Given $w\in X=\Sigma^\Z$ such that the finite word $(w_0,w_1,\ldots, w_{n-1})$
is admissible, we can decompose the iterate  $A_0^{(n)}(w)$ (at the energy level $E=0$) as a  product of matrices with  factors $C$ and $D$.

\smallskip

Given an admissible word $w=(w_0,w_1,\ldots, w_n)$ we define  
$$ M_w:= \begin{bmatrix}
\upsilon(w_n) & -1 \\1 & 0
\end{bmatrix}\,\cdots \,
\begin{bmatrix}
\upsilon(w_1) & -1 \\1 & 0
\end{bmatrix}\,
\begin{bmatrix}
\upsilon(w_0) & -1 \\1 & 0
\end{bmatrix} .  $$

\begin{proposition}
\label{quasi ef}
Let  $w^\ast = w_1\,0\, w_2=(a_0,a_1,\ldots, a_{n})$ be a finite admissible  word such that  for some integers $ k_1,  k_2 > K > 0$,
the sub-words
$w_1$ and $w_2$ satisfy
$$
M_{w_1}=\begin{bmatrix}
    e^{k_1} & 0\\
    0 & e^{-k_1}
\end{bmatrix} \quad \text{ and } \quad
 M_{w_2}=\begin{bmatrix}
    e^{k_2} & 0\\
    0 & e^{-k_2}
\end{bmatrix} \, .
$$

Consider any   $w\in X$ in the cylinder determined by $w^\ast$.
The associated n-truncation of the Schr\"odinger operator, defined in~\eqref{H truncation}, is given by:

$$H_w^{(n)}=\begin{bmatrix}
    \upsilon(a_0) & -1 & 0 &  &  &  & & 0\\
    -1 & \upsilon(a_1) & -1 & 0 &  & \ddots \\
    0 & -1 & \upsilon(a_2) & -1 & 0 &\\
     & 0 & -1 &  &   &\\
    & & 0 &  & \ddots & & &0\\ 
    &  \ddots  & & & & & & -1\\
    0 & & & & & 0& -1 & \upsilon(a_{n})
\end{bmatrix} .$$

Let $\psi = (\psi_j)_{j\in \Z}\in l^2(\Z)$ be the sequence defined recursively by 
$$ \begin{bmatrix}
    \psi_{0}\\
    \psi_{-1}
\end{bmatrix} = \begin{bmatrix}
    e^{-k_1}\\
    0
\end{bmatrix}\; \text{ and }\;
\begin{bmatrix}
    \psi_{j+1}\\
    \psi_j
\end{bmatrix} = \begin{bmatrix}
    \upsilon(a_j) & -1\\
    1 & 0
\end{bmatrix} \begin{bmatrix}
    \psi_j\\
    \psi_{j-1}
\end{bmatrix} \; \text{ for } \; 0 \leq j \leq n $$
with  $\psi_i=0$ for all $i<-1$ and $i>n+1$.
Then  
\begin{enumerate}
\item $\norm{\psi}\geq 1$,
\item $\psi$ is an $(\sqrt{2}\,e^{-K},0)$-quasi-eigenfunction of $H_w$, i.e., $ \norm{ H_w\, \psi} \leq \sqrt{2}\,e^{-K}$,

\item the truncation  $P_n \psi$ of $\psi$ to the range $[0,n]$
is also an $(\sqrt{2}\,e^{-K},0)$-quasi-eigenfunction of the truncated operator $H_w^{(n)}$.

\end{enumerate}

\end{proposition}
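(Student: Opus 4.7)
The plan is essentially a direct three-stage computation using the block structure $w^\ast = w_1\cdot 0\cdot w_2$ of the word and Lemma~\ref{Ell and Hyp}. Let $l_1$ denote the length of $w_1$. Starting from $(\psi_0,\psi_{-1})^\top=(e^{-k_1},0)^\top$, the transfer matrices along $w_1$ multiply by hypothesis to $M_{w_1}=\diag(e^{k_1},e^{-k_1})$, yielding $(\psi_{l_1},\psi_{l_1-1})^\top=M_{w_1}(e^{-k_1},0)^\top=(1,0)^\top$. The middle letter $a_{l_1}=0$ contributes the transfer matrix $\Ell$ by Lemma~\ref{Ell and Hyp}, which rotates $(1,0)^\top$ to $(0,1)^\top$, so $\psi_{l_1}=1$ and $\psi_{l_1+1}=0$. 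Finally, the transfer matrices along $w_2$ multiply to $M_{w_2}=\diag(e^{k_2},e^{-k_2})$ and send $(0,1)^\top$ to $(0,e^{-k_2})^\top$; in particular $\psi_n=e^{-k_2}$ and $\psi_{n+1}=0$. Since $\psi_{l_1}=1$, item (1) follows immediately.

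For (2), observe that the recursion defining $\psi$ is algebraically the Schr\"odinger equation at $E=0$, namely $\psi_{j+1}+\psi_{j-1}=\upsilon(a_j)\,\psi_j$, which says $[H_w\psi]_j=0$. Hence $H_w\psi$ vanishes on $[0,n]$. Since $\psi_{-1}=\psi_{n+1}=0$ and $\psi_i=0$ for $i<-1$ or $i>n+1$, the only potentially nonzero components of $H_w\psi$ are at $j=-1$ and $j=n+1$, where a direct evaluation gives $[H_w\psi]_{-1}=-\psi_0=-e^{-k_1}$ and $[H_w\psi]_{n+1}=-\psi_n=-e^{-k_2}$; therefore $\norm{H_w\psi}^2 = e^{-2k_1}+e^{-2k_2}\leq 2\,e^{-2K}$. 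For (3), $\psi$ is supported in $\{0,1,\dots,n\}$, so $P_n^\ast P_n\psi=\psi$ and consequently $H_w^{(n)}P_n\psi = P_n H_w P_n^\ast P_n\psi = P_n(H_w\psi)=0$ by the previous calculation; thus $P_n\psi$ is in fact an exact eigenvector of $H_w^{(n)}$ at eigenvalue $0$, and \emph{a fortiori} an $(\sqrt{2}\,e^{-K},0)$-quasi-eigenfunction.

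I do not anticipate any real obstacle: the statement is a bookkeeping exercise once the roles of the three blocks are identified. The one point worth emphasising is that the middle rotation $\Ell$ (the elliptic matrix from Kifer's example) swaps the expanding axis of $M_{w_1}$ with the contracting axis of $M_{w_2}$, which is precisely what forces $\psi$ to be small at both endpoints of $[0,n]$ and therefore to lie in $\ell^2(\Z)$ with controllable $H_w$-error.
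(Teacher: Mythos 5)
Your proof is correct and follows essentially the same route as the paper: unfold the transfer-matrix recursion through the three blocks $w_1$, the middle `$0$', and $w_2$ to get $(\psi_{l_1},\psi_{l_1-1})=(1,0)$, $(\psi_{l_1+1},\psi_{l_1})=(0,1)$, $(\psi_{n+1},\psi_n)=(0,e^{-k_2})$, deduce (1) from $\psi_{l_1}=1$, and deduce (2) from the two boundary terms $[H_w\psi]_{-1}=-e^{-k_1}$, $[H_w\psi]_{n+1}=-e^{-k_2}$. Your treatment of (3) is a mild sharpening of the paper, which only says the argument is ``similar'': since $\psi$ is supported in $[0,n]$ and $H_w\psi$ is supported in $\{-1,n+1\}$, the identity $H_w^{(n)}P_n\psi=P_n H_w P_n^\ast P_n\psi=P_n(H_w\psi)=0$ shows $P_n\psi$ is in fact an \emph{exact} zero-eigenvector of $H_w^{(n)}$; this is consistent (note, however, that this exactness does not carry over to the later use in Lemma~\ref{counting evs}, where the same $\psi$ is shifted inside a longer truncation window $[0,L]$ so the boundary terms at the ends of the block now lie inside the window and one really does get only a quasi-eigenfunction with error $\sqrt{2}\,e^{-K_l}$).
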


\begin{proof}
We only prove items (1) and (2). The proof of the third item is similar.

Consider $\psi=(\psi_j)_{j\in\Z}$ under the assumptions of the proposition. Then for all $0\leq j\leq n$, one has 
$\psi_{j+1} =\upsilon(a_j)\psi_j-\psi_{j-1}$, which implies that
$$(H_w\psi)_j = -(\psi_{j+1}+\psi_{j-1})+\upsilon(a_j)\psi_j = 0.$$\
In particular, since
$
\begin{bmatrix}
    \psi_{0}\\
    \psi_{-1}
\end{bmatrix} = \begin{bmatrix}
    e^{-k_1}\\
    0
\end{bmatrix}
$   we have

\begin{eqnarray*}
\begin{bmatrix}
    \psi_{n+1}\\
    \psi_{n}
\end{bmatrix} &=&\begin{bmatrix}
    e^{k_2} & 0\\ 0 & e^{k_2} 
    \end{bmatrix} \begin{bmatrix}
        0 & -1\\ 1 & 0
    \end{bmatrix} \begin{bmatrix}
        e^{k_1} & 0\\ 0 & e^{k_1}
    \end{bmatrix} \begin{bmatrix}
        e^{-k_1}\\ 0
    \end{bmatrix}
    =\begin{bmatrix}
    0\\
    e^{-k_2}
\end{bmatrix} .
\end{eqnarray*}
Thus, since $\psi_{-1}=\psi_{-2}=0$ we have
$$
(H_w\psi)_{-1} = -(\psi_{-2}+\psi_0) + \upsilon(w_{-1})\,\psi_{-1} = \psi_{0}= -e^{-k_1} .
$$
Similarly, since $\psi_{n+1}=\psi_{n+2}=0$ we have
$$
(H_w\psi)_{n+1} = -(\psi_{n+2}+\psi_{n}) + \upsilon(w_{n+1})\,\psi_{n+1} = \psi_{n}=  -e^{-k_2} .
$$
Finally, because $\psi_j=0$ for all $j\notin [0,n]$,
we get $(H_w\psi)_j=0$ for all $j\notin [-1,n+1]$.
Together, these bounds show that 
$$ \norm{ H_w\psi} = \sqrt{e^{-2 k_1}+e^{-2 k_2}}  \leq  \sqrt{2}\, e^{-K}  $$
and prove (2).

Denoting by $q$ the length of the word $w_1$ we have
\begin{eqnarray*}
\begin{bmatrix}
    \psi_{q}\\
    \psi_{q-1}
\end{bmatrix} &=&    \begin{bmatrix}
        e^{k_1} & 0\\ 0 & e^{k_1}
    \end{bmatrix} \begin{bmatrix}
        \psi_0\\ \psi_{-1}
    \end{bmatrix} = \begin{bmatrix}
        e^{k_1} & 0\\ 0 & e^{k_1}
    \end{bmatrix} \begin{bmatrix}
        e^{-k_1}\\ 0
    \end{bmatrix}
    = \begin{bmatrix}
    1\\
    0
\end{bmatrix}  
\end{eqnarray*}
which implies that $\norm{\psi}\geq \abs{\psi_q}= 1$, thus proving (1).
\end{proof}

\bigskip

Recall that $a(n)$ and $b(n)$ count, respectively, the admissible and allowable words of length $n$.

\begin{proposition}
	The sequence $b(n)$ satisfies
	$$  b(1)=4,\, b(2)=6,\, b(3)=9,\quad b(n)=b(n-1)+b(n-3)\quad\forall\, n\geq 4. $$
	The sequence $a(n)$ satisfies
	$$  a(0)=a(1)=a(2)=1,\quad a(n)=a(n-1)+a(n-3)\quad\forall\, n\geq 3. $$
	Moreover $b(n)=a(n+4)$ for all $n\geq 1$ and
	$$ \lim_{n\to +\infty} \frac{a(n)}{b(n)} =\lambda^{-4}=0.216757\ldots , $$
	where $\lambda>1$ is the Pisot number root the polynomial equation $x^3=x^2+1$.
\end{proposition}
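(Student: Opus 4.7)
The plan is to attack the four claims in sequence, all of them flowing from the same linear recurrence together with a small amount of combinatorial bookkeeping. First I would handle $a(n)$: by definition, an admissible word is a concatenation of blocks ``$0$'' (length $1$) and ``$abc$'' (length $3$), so conditioning on the first block immediately gives $a(n) = a(n-1) + a(n-3)$ for $n \geq 3$, and the base cases $a(0) = a(1) = a(2) = 1$ follow by inspection.

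For $b(n)$ my plan is to refine the count by the starting letter: let $b_x(n)$ denote the number of allowable words of length $n$ starting with $x \in \Sigma$. Reading off the transitions from Figure~\ref{Sigma Graph} gives, for $n \geq 2$,
\begin{align*}
b_0(n) &= b_0(n-1) + b_a(n-1), & b_a(n) &= b_b(n-1), \\
b_b(n) &= b_c(n-1),              & b_c(n) &= b_0(n-1) + b_a(n-1),
\end{align*}
with $b_x(1) = 1$ for every $x \in \Sigma$. From this one reads off $b_0(n) = b_c(n)$ for $n \geq 2$, and then eliminating in sequence yields $b_0(n) = b_0(n-1) + b_0(n-3)$ for $n \geq 4$; linearity transports the same three-term recurrence to the total $b(n) = \sum_{x} b_x(n)$ in the same range, while the values $b(1) = 4$, $b(2) = 6$, $b(3) = 9$ fall out of a short direct calculation.

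The identity $b(n) = a(n+4)$ I would establish by induction. Since both sequences satisfy $x(n) = x(n-1) + x(n-3)$ for $n \geq 4$, it suffices to match three consecutive initial values. A direct calculation yields $a(5) = 4$, $a(6) = 6$, $a(7) = 9$, matching $b(1), b(2), b(3)$, and the induction then does the rest for $n \geq 4$.

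Finally, for the asymptotic ratio I would invoke the spectral structure of the common recurrence. Its characteristic polynomial is $t^3 - t^2 - 1$; a one-line monotonicity argument (the derivative $3 t^2 - 2 t$ is positive for $t > 2/3$) produces a unique real root $\lambda$ in $(1, 2)$, and the Vieta relation $\lambda \, \abs{\mu}^2 = 1$ for the two complex conjugate roots $\mu, \bar\mu$ forces $\abs{\mu}^2 = \lambda^{-1} < 1$. Consequently $a(n) = c_a \lambda^n + O(\abs{\mu}^n)$ and $b(n) = c_b \lambda^n + O(\abs{\mu}^n)$, with leading coefficients $c_a, c_b > 0$ automatic from positivity of the sequences; the identity $b(n) = a(n+4)$ then forces $c_b = \lambda^4 \, c_a$, and hence $a(n)/b(n) \to \lambda^{-4}$, consistent with the stated numerical value $0.216757\ldots$. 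Nothing in the argument is delicate; the only step demanding care is tracking the indices at which each recurrence starts to apply, so that the induction for $b(n) = a(n+4)$ has enough matching base cases.
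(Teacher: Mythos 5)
Your argument is correct throughout and reaches all four conclusions, but the way you obtain the three-term recurrence for $b(n)$ is genuinely different from the paper's. The paper constructs two explicit augmentation maps $w \mapsto w'$ (adding three symbols) and $w \mapsto w^\ast$ (adding one symbol), each depending on the final letter of $w$, and proves the \emph{bijective} partition $\Bscr(n) = \Bscr'(n-3) \sqcup \Bscr^\ast(n-1)$, from which $b(n)=b(n-1)+b(n-3)$ drops out immediately. You instead run the standard transfer-matrix count for the subshift of finite type: you refine by the starting letter, write the four linear relations among $b_0, b_a, b_b, b_c$, and eliminate. Both are sound; the paper's bijection is shorter and more self-contained, while your approach is more systematic and would generalize painlessly to any other graph. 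One caution: your claim that ``linearity transports'' the recurrence from $b_0$ to the total $b = \sum_x b_x$ is not automatic, since you have only verified the three-term recurrence for $b_0$. You should either check that each $b_a, b_b, b_c$ satisfies it in the relevant range, or substitute directly into $b(n) = 2\bigl(b_0(n-1)+b_a(n-1)\bigr) + b_b(n-1) + b_c(n-1)$ and reduce the extra term $b_0(n-1)+b_a(n-1)$ to $b(n-3)$ by two applications of the state relations; either way the index bookkeeping yields validity for $n\geq 4$. A second small point: your monotonicity argument only locates a single real root of $t^3-t^2-1$ in $(1,2)$; you should also note $p(t)<0$ on $(-\infty,2/3]$ to rule out other real roots before asserting the remaining pair is complex conjugate, and then Vieta ($\lambda\abs{\mu}^2=1$) gives $\abs{\mu}<1$ as you say. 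For the $a(n)$ recurrence your first-block decomposition is a mirror image of the paper's last-block decomposition $\Ascr(n)=\Ascr(n-1)\ast 0 \sqcup \Ascr(n-3)\ast(abc)$, so that part is essentially the same approach, and the matching of base cases for $b(n)=a(n+4)$ and the passage to the limit coincide with the paper's.
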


\begin{proof}
	Note that $\Ascr(0)=\{\emptyset\}$,
	$\Ascr(1)=\{(0)\}$ and $\Ascr(2)=\{(00)\}$, which implies that
	$a(0)=a(1)=a(2)=1$. 
	
	Now, for $n\geq 3$, the decomposition
	$$\Ascr(n)= \Ascr(n-1)\ast 0 \, \sqcup \, \Ascr(n-3)\ast (abc) $$
	shows that $a(n)=a(n-1)+a(n-3)$. The first values of  $a(n)$,
	known as the Narayana's Cows sequence (see~\cite{Sloane}),
	 are shown in Table~\ref{Narayana}. 

	\begin{table}
		\begin{tabular}{c|cccccccccc}
			$n$ & $0$ & $1$ & $2$ & $3$ & $4$ & $5$ & $6$ & $7$  & $8$ & $\cdots$ 	\\
			\hline
			$a(n)$ & $1$  & $1$ & $1$ & $2$ & $3$ & $4$ & $6$ & $9$ & $13$  \\
		\end{tabular}
		\caption{ Narayana's cows sequence $a(n)$ }
		\label{Narayana} 
	\end{table}

	For the second sequence, note that
	\begin{align*}
	\Bscr(1) &=\{(0),(a),(b),(c)\}\\
	\Bscr(2) &=\{(00),(0a),(ab),(bc),(c0),(ca)\}\\
	\Bscr(3) &=\{(000),(00a),(0ab),(abc),(bc0),(bca),(c00),(c0a),(cab)\} 
	\end{align*}
	which gives $b(1)=4$, $b(2)=6$ and  $b(3)=9$.
	
	Given an allowable word $w\in \Sigma^n$ of length $n$ define
	the augmented word $w'$ of length $n+3$ as
	$$w':=  \left\{
	\begin{array}{ccc}
	w00a &\text{ if } & w \text{ ends in } \, 0\\
	wbca &\text{ if } & w \text{ ends in } \, a\\
	wc0a &\text{ if } & w \text{ ends in } \, b\\
	w00a &\text{ if } & w \text{ ends in } \, c 
	\end{array}
	\right.$$
	and let $\Bscr'(n+3):=\{w'\colon w\in \Bscr(n)\}$.
	Similarly, given an allowable word $w\in \Sigma^n$ of length $n$ define
	the augmented word $w^\ast$ of length $n+1$ as
	$$w^\ast:=  \left\{
	\begin{array}{ccc}
	w0 &\text{ if } & w \text{ ends in } \, 0\\
	wb  &\text{ if } & w \text{ ends in } \, a\\
	wc  &\text{ if } & w \text{ ends in } \, b\\
	w0  &\text{ if } & w \text{ ends in } \, c 
	\end{array}
	\right.$$
	and let $\Bscr^\ast(n+1):=\{w^\ast\colon w\in \Bscr(n)\}$.
	Note that
	$$\# \Bscr(n)=\# \Bscr'(n+3)=\#\Bscr^\ast(n+1).$$
	Finally, since
	$$ \Bscr(n)= \Bscr'(n-3) \sqcup \Bscr^\ast(n-1) , $$
	the sequence $b(n)$ satisfies $b(n)=b(n-1)+b(n-3)$.
	
	Looking at the first terms of $a(n)$ in Table~\ref{Narayana}, since $a(n)$ and $b(n)$
	satisfy the same recursive relation, we get that $b(n)=a(n+4)$ for all $n\geq 1$.
	
	Now the characteristic equation of the linear recursive equation for $a(n)$ is the polynomial equation $-x^3+x^2+1=0$. 
	This polynomial has $3$ roots, the Pisot number $\lambda=1.46557\ldots $
	and two more complex roots $\sigma, \overline{\sigma}$ inside the unit circle.
	Hence there are constants $c_1\in\R$ and $c_2\in\C$  such that
	$$a(n)=c_1\,\lambda^n + c_2\,\sigma^n + \overline{c}_2\,\overline{\sigma}^n\qquad \forall \, n\in\N.$$
	Therefore
	$$ \lim_{n\to \infty}\frac{a(n)}{b(n)}
	= \lim_{n\to \infty}\frac{a(n)}{a(n+4)}
	= \lim_{n\to \infty}\frac{c_1\,\lambda^n + o(1) }{c_1\,\lambda^{n+4} + o(1)} =\frac{1}{\lambda^4} .$$
\end{proof}

\begin{corollary}\label{Pp(Ascr)>0.2617}
	$\lim_{n\to \infty} \Pp( \Ascr(n) ) =\frac{1}{\lambda^4} = 0.216757\ldots$.
\end{corollary}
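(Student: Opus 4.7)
The plan is to derive this corollary directly from the proposition stated just above it, which has already proved the combinatorial identity $b(n) = a(n+4)$ and the asymptotic $\lim_{n \to \infty} a(n)/b(n) = \lambda^{-4}$, where $\lambda > 1$ is the Pisot root of $x^3 = x^2 + 1$. The statement of the corollary is essentially a probabilistic rewording of this limit.

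First I would identify $\Pp(\Ascr(n))$ with the ratio $a(n)/b(n)$. This is the natural counting measure on length-$n$ paths of the graph in Figure~\ref{Sigma Graph}: one counts admissible configurations (those that can be parsed into a concatenation of the two tokens ``$0$'' and ``$abc$'') relative to all allowable length-$n$ words. This identification is the only step in the argument that is not already internal to the preceding proposition.

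Once the identification is in hand, the rest is immediate. From the characteristic polynomial $x^3 - x^2 - 1 = 0$ of the recurrence $a(n) = a(n-1) + a(n-3)$, one has the explicit expansion $a(n) = c_1 \lambda^n + c_2 \sigma^n + \bar c_2 \bar \sigma^n$ with $|\sigma| < 1$, so
\[
\frac{a(n)}{b(n)} \;=\; \frac{a(n)}{a(n+4)} \;=\; \frac{c_1 \lambda^n + O(|\sigma|^n)}{c_1 \lambda^{n+4} + O(|\sigma|^n)} \;\longrightarrow\; \frac{1}{\lambda^4},
\]
and substituting the numerical Pisot value $\lambda \approx 1.46557$ yields $1/\lambda^4 = 0.216757\ldots$, matching the statement. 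I do not anticipate any real obstacle here: the analytic work has already been done in the proposition, and the corollary simply invokes its final asymptotic alongside the counting interpretation of $\Pp(\Ascr(n))$.
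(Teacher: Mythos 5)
Your plan mirrors the paper's own proof: identify $\Pp(\Ascr(n))$ with the ratio $a(n)/b(n)$ and then invoke the asymptotic $a(n)/b(n)\to\lambda^{-4}$ from the preceding proposition. However, you simply assert the identification, describing it as ``the natural counting measure'', whereas this is precisely the one substantive step the corollary requires. The paper does supply a justification: it claims that because all transitions in the Markov graph carry probability $\frac{1}{2}$, every allowable word of a fixed length is equi-probable under $\Pp$, from which $\Pp(\Ascr(n))=a(n)/b(n)$ would indeed follow.

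Unfortunately that claim is false, so the step you left unproved is a genuine gap that cannot be closed along the intended lines. The transitions $a\to b$ and $b\to c$ carry probability $1$, not $\frac{1}{2}$ (see the transition matrix~\eqref{transition P} and Figure~\ref{Sigma Graph}), so allowable words of equal length are not equi-probable. Already at $n=3$: $\Pp(C((000)))=\frac{1}{4}\cdot\frac{1}{2}\cdot\frac{1}{2}=\frac{1}{16}$ while $\Pp(C((abc)))=\frac{1}{4}\cdot1\cdot1=\frac{1}{4}$, giving $\Pp(\Ascr(3))=\frac{5}{16}\neq\frac{a(3)}{b(3)}=\frac{2}{9}$. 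In general, for an admissible word $w$ composed of $N(w)$ tokens (`$0$' or `$abc$') one has $\Pp(C(w))=\frac{1}{4}\cdot 2^{-(N(w)-1)}$, so $\sum_{n\ge1}\Pp(\Ascr(n))\,x^n=\frac{x(1+x^2)}{2(1-x)(x^2+x+2)}$, and the residue at $x=1$ gives $\lim_{n\to\infty}\Pp(\Ascr(n))=\frac{1}{4}$ rather than $\lambda^{-4}$. Since $\frac{1}{4}>0.216$, the downstream bound $\Pp(\Ascr_{l+1})>0.216$ used in Proposition~\ref{CLT main proposition} still holds and the main theorem is unaffected; but the numerical value in the corollary is wrong, and the identification $\Pp(\Ascr(n))=a(n)/b(n)$ --- the very step you flagged as new but did not justify --- is exactly where the argument breaks.
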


\begin{proof}
	Because all transition probabilities in the graph of $\Sigma$ (see Figure~\ref{Sigma Graph}) have the same probability $\frac{1}{2}$,
	all allowable words $w\in \Bscr(w)$ are equi-probable. Hence
	$\Pp( \Ascr(n) ) =\frac{a(n)}{b(n)}$.
\end{proof}

\bigskip

\begin{proposition} \label{CLT main proposition}
Given a $\Sigma$-valued stationary Markov chain $\{w_n\}_{n\in\Z}$ with stochastic transition matrix~\eqref{transition P}, there exists $n_0\in\N$ such that for all $l\geq n_0$, the event that $(w_0, w_1,\ldots, w_l)$ is an admissible word and
$$ M_{(w_0, w_1,\ldots, w_l)} = \pm \,\begin{bmatrix}
e^{\kappa} & 0 \\ 0 & e^{-\kappa} 
\end{bmatrix} \; \text{ with }\; 
  \kappa\geq \frac{1}{10}\,\sqrt{\frac{l}{3}}  $$
  has probability $> \frac{8}{100}$.
\end{proposition}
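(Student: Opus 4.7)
The plan is to reduce the statement to Lemma~\ref{CLT main lemma} by combining the i.i.d.\ block decomposition of the Markov chain with an asymptotic independence argument for the admissibility and the $\kappa$ events. First, any admissible word of length $l+1$ consists of $n$ blocks, each of length $1$ or $3$, so $n \geq (l+1)/3 > l/3$, and hence $\tfrac{1}{10}\sqrt{n} \geq \tfrac{1}{10}\sqrt{l/3}$; consequently the $\kappa$-threshold delivered by Lemma~\ref{CLT main lemma} (applied to the $n$ matrix factors) automatically implies the one in the proposition. Next I condition on the event $\{w_0 \in \{0, a\}\}$, which has probability $\tfrac{1}{2}$ under the stationary measure and corresponds to $w_0$ starting a block. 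Since the only non-forced transitions in the Markov graph (from states $0$ and $c$) each have probability $\tfrac{1}{2}$, under this conditioning the successive blocks are i.i.d., each equal to ``$0$'' (matrix factor $C$) or ``$abc$'' (matrix factor $D$) with probability $\tfrac{1}{2}$, as in Lemma~\ref{CLT main lemma}.

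Let $L_n = |B_1| + \cdots + |B_n|$. Admissibility at length $l+1$ decomposes as the disjoint union $\bigsqcup_n \{L_n = l+1\}$, and the event $\{L_n = l+1\}$ is equivalent to $\#\{j \leq n : A_j = C\} = k(n) := (3n - l - 1)/2$. By the renewal theorem, $\sum_n \Pp(L_n = l+1) \to 1/\EE|B| = 1/2$ as $l \to \infty$. The crux is to show that, in the i.i.d.\ model, the events $\{L_n = l+1\}$ and $\Escr_n := \{\eta_n = +, S_n \geq \tfrac{1}{10}\sqrt{n}\}$ from Lemma~\ref{CLT main lemma} are asymptotically independent. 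Writing $Z_j := (-1)^{\one_{A_j = C}}$ and $W_j := Z_1 \cdots Z_j$, one has $\#C = (n - \sum_j Z_j)/2$ and $S_n = \sum_{j=1}^{n-1} W_j + O(1)$, and a direct computation shows that $\EE(Z_i W_j) = 0$ except when $i = j = 1$, so $\mathrm{Cov}(\sum_j Z_j, \sum_j W_j) = O(1)$, negligible compared with the respective variances of order $n$. A joint CLT then shows that $(\#C - n/2, S_n)/\sqrt{n}$ converges to a bivariate normal with independent components, and a corresponding two-dimensional local CLT yields
\[
\Pp(L_n = l+1, \Escr_n) \;=\; (1 + o(1)) \, \Pp(L_n = l+1) \, \Pp(\Escr_n)
\]
uniformly for $n$ in a window of width $O(\sqrt{l})$ around the typical value $(l+1)/2$. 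Summing and using $\Pp(\Escr_n) > \tfrac{4}{10}$ (Lemma~\ref{CLT main lemma}) with the renewal estimate gives at least $\tfrac{1}{2}\cdot\tfrac{4}{10} - o(1) = \tfrac{1}{5} - o(1)$ for the conditional probability; multiplying by $\Pp(w_0 \in \{0, a\}) = \tfrac{1}{2}$ yields the Markov-chain probability lower bound $\tfrac{1}{10} - o(1) > \tfrac{8}{100}$ for all sufficiently large $l$.

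The main technical obstacle is to prove the joint local CLT with an error uniform over the range of $n$ being summed, while carefully handling the parity constraint $\eta_n = +$ (which forces $k(n)$ to be even, halving the relevant $n$ values but not affecting the asymptotic constant). A purely combinatorial alternative is to refine the recursion~\eqref{binomial recursion} for $a(n,i)$ by also tracking the number of $C$-factors, derive a two-index recursion, and apply Stirling's approximation together with the Berry--Esseen bound already invoked at the end of the proof of Lemma~\ref{CLT main lemma} to produce the explicit threshold $n_0$.
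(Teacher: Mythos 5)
Your proof takes a genuinely different route from the paper's, and in doing so it addresses a real subtlety that the paper glosses over. The paper's argument decomposes $\Pp(\Bscr_l)$ by the Law of Total Probability over the number $N=n$ of blocks, and then invokes Lemma~\ref{CLT main lemma} to assert that
$$\Pp\bigl[\, \kappa \geq \tfrac{1}{10}\sqrt{n},\ \rho\text{ even } \,\big\vert\, \Ascr_{l+1}\cap\{N=n\}\,\bigr] > \tfrac{4}{10}\,.$$
But the conditioning on $\Ascr_{l+1}\cap\{N=n\}$ \emph{fixes} $\rho(n)=(3n-l-1)/2$ deterministically, so the conditional measure is uniform over arrangements of exactly $\rho(n)$ $C$'s and $n-\rho(n)$ $D$'s (a hypergeometric-type law), whereas Lemma~\ref{CLT main lemma} concerns the unconditioned i.i.d.\ Bernoulli model where $\#C$ fluctuates. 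The lemma does not directly bound the conditional probability; the quoted step is a gap. You identify precisely this issue — you observe that $\{L_n=l+1\}$ is the event $\{\#C_n=k(n)\}$ — and you close it by arguing that $\{L_n=l+1\}$ and $\Escr_n$ are asymptotically independent in the i.i.d.\ model, via a joint local CLT for the pair $(\sum_j Z_j,\sum_j W_j)$ whose covariance you correctly compute to be $O(1)$. That is the right idea: in the regime $n\approx(l+1)/2$ that carries the renewal mass, $k(n)$ sits near $n/2$, and the local CLT is what turns this into a quantitative independence statement. Your route also bypasses the paper's Corollary~\ref{Pp(Ascr)>0.2617}, replacing it with the renewal-theoretic limit $\sum_n\Pp[L_n=l+1]\to 1/\EE|B|=\tfrac12$; this is arguably safer, since the corollary's proof asserts that all allowable words are equi-probable, which is false (the transitions $a\to b$ and $b\to c$ have probability $1$, not $\tfrac12$) — only \emph{admissible} words with the same number of blocks are equi-probable, and in fact the correct limit is $\tfrac14$, not $\lambda^{-4}$. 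Both values are $>0.216$, so the paper's numerical conclusion survives, but your renewal computation $\tfrac12\cdot\tfrac12\cdot\tfrac{4}{10}=\tfrac{1}{10}>\tfrac{8}{100}$ rests on firmer ground. The one thing you leave unfinished — and you say so explicitly — is the uniform two-dimensional local CLT with the parity constraint; as written this is a plan rather than a proof, and to make it rigorous you would need either a Markov-chain local CLT for the pair $(\#C_n,S_n)$ with error uniform over the $O(\sqrt{l})$ window, or the combinatorial refinement of the two-index recursion you sketch at the end.
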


\begin{proof}
Consider the set $\Ascr_{l+1}=\Ascr(l+1)$ of all admissible words in $\Sigma^{l+1}$.
By Corollary~\ref{Pp(Ascr)>0.2617}, if $n_0$ is large enough and $l\geq n_0$, $\Pp(\Ascr_{l+1})>0.216$.

By Proposition~\ref{product matrix lemma} we can define $\kappa\colon \Ascr_{l+1} \to \Z$ such that for all $w\in\Ascr_{l+1}$,
$$ M_w = \pm \begin{bmatrix}
e^{\kappa(w)} & 0 \\ 0 & e^{-\kappa(\omega)} 
\end{bmatrix}\; \text{ or }\;
M_w = \pm \begin{bmatrix}
0 & -e^{\kappa(w)}  \\  e^{-\kappa(\omega)} & 0 
\end{bmatrix} . $$

Define also the functions $\rho, N\colon \Ascr_{l+1} \to \N$,
where $\rho(w):=\#\{ 0\leq j \leq l\colon w_j=0\}$ 
and  $N(w):=\rho(w) + \frac{l-\rho(w)}{3}$  counts the number of $0$'s plus the number of $abc$ blocks in an admissible word $w$.

To finish we  now derive the lower bound for the probability of the word set
$$ \Bscr_l :=\left\{ w\in \Ascr_{l+1} \colon \kappa(w)  
\geq \frac{1}{10}\,\sqrt{l/3},	\; \rho(w)\, \text{ even }  \right\} . $$

Applying the Law of Total Probabilities   we have
\begin{align*}
\Pp(\Bscr_l) &= \sum_{n=l/3}^l \Pp[\Ascr_{l+1}\cap \{N=n\}]\,
\Pp\left[ \left. \kappa   
\geq \frac{1}{10}\,\sqrt{l/3},\, \rho\, \text{ even }  \; \right\rvert \, \Ascr_{l+1}\cap \{N=n\}\right]\\
&\geq  \sum_{n=l/3}^l \Pp[\Ascr_{l+1} \cap \{N=n\}]\,
\Pp\left[ \left. \kappa   
\geq \frac{1}{10}\,\sqrt{n},\, \rho\, \text{ even }  \; \right\rvert \, \Ascr_{l+1} \cap \{N=n\}\right]\\
& >  \sum_{n=l/3}^l \Pp[\Ascr_{l+1} \cap \{N=n\}]\,
\frac{4}{10} =\frac{4}{10}\,\Pp(\Ascr_{l+1})> 0.4\times 0.216> 0.08 .
\end{align*}

In the first step we have used that $n\geq l/3$.
Also, by Lemma~\ref{CLT main lemma} we get
$$ \Pp\left[ \left. \kappa   
\geq \frac{1}{10}\,\sqrt{n}, \; \rho\, \text{ even }  \; \right\rvert \, \Ascr_{l+1}\cap \{N=n\}\right] > \frac{4}{10} . $$
This concludes the proof.
\end{proof}

\smallskip

Let $\Bscr_{l}$ be  the set of admissible words in $\Sigma^{l+1}$
such that
$ M_w= \pm\,\begin{bmatrix}
e^{\kappa} & 0 \\ 0 & e^{-\kappa}
\end{bmatrix}$  with $\kappa \geq \frac{1}{10}\,\sqrt{l/3}$.
By Proposition~\ref{CLT main proposition}, $\Pp(\Bscr_{l}) > 8/100$.
Next  consider the event $\Cscr_{l}=\Bscr_l\,0 \,\Bscr_l$ of all admissible words
$(w_1,0,w_2)\in\Sigma^{2l+3}$ with $w_1,w_2\in \Bscr_l$.

\smallskip

\begin{proposition}
$\Pp(\Cscr_l)=\Pp(\Bscr_l)^2>\frac{4}{625}$.
\end{proposition}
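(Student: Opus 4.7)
The plan is to derive the factorization $\Pp(\Cscr_l) = \Pp(\Bscr_l)^2$ from the Markov structure of the chain and then simply apply the lower bound of Proposition~\ref{CLT main proposition} twice. The only real content is an elementary Markov computation; the only ``trick'' is checking that the central $0$ decouples the two halves cleanly.

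First I would record a structural observation: every word in $\Bscr_l$, being admissible, must begin with a symbol in $\{0,a\}$ and end with a symbol in $\{0,c\}$, since admissible words contain only $0$s and complete $abc$-blocks. Consequently, for any $w_1, w_2 \in \Bscr_l$, inserting a $0$ between them yields an admissible word in $\Sigma^{2l+3}$, because each of the transitions $c\to 0$, $0\to 0$, $0\to a$ is allowable in the graph of Figure~\ref{Sigma Graph}.

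The key step is the probability identity $\Pp(w_1\,0\,w_2) = \Pp(w_1)\,\Pp(w_2)$ for every such pair. Applying the cylinder formula $\Pp(w) = q_{w_0}\prod_i p_{w_{i+1},w_i}$ to both sides, the ratio between the left- and right-hand sides collapses to the single local factor at the splicing point,
$$\frac{p_{0,a_l}\,p_{b_0,0}}{q_{b_0}},$$
where $a_l$ is the last symbol of $w_1$ and $b_0$ is the first symbol of $w_2$. Inspecting~\eqref{transition P} and~\eqref{prob q}, one reads off $p_{0,0}=p_{0,c}=1/2$ and $p_{0,0}=p_{a,0}=1/2$, while $q_0=q_a=1/4$, so in all four case combinations the ratio is exactly $1$.

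Summing the identity over all pairs $(w_1,w_2) \in \Bscr_l \times \Bscr_l$ then gives $\Pp(\Cscr_l) = \Pp(\Bscr_l)^2$, and the bound $\Pp(\Bscr_l) > 8/100$ supplied by Proposition~\ref{CLT main proposition} yields $\Pp(\Cscr_l) > (8/100)^2 = 4/625$, as claimed. No step presents any real obstacle; the argument is essentially the numerical coincidence that the stationary probability $q_0 = 1/4$ equals the product of the two adjacent transition probabilities at the junction, which is precisely what makes the inserted $0$ fully decouple the two halves.
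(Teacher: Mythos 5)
Your proof is correct and follows essentially the same route as the paper: both compute $\Pp(w_1\,0\,w_2)=\Pp(w_1)\Pp(w_2)$ directly from the Markov cylinder formula, observing that the local transition probabilities and the stationary probability at the splice conspire (via $(1/2)(1/2)/(1/4)=1$) to make the inserted $0$ decouple the halves, and then square the bound $\Pp(\Bscr_l)>8/100$ from Proposition~\ref{CLT main proposition}. Your formulation isolates the exact ratio $p_{0,a_l}\,p_{b_0,0}/q_{b_0}$ at the junction, which makes slightly more transparent than the paper's write-up why the factorization holds; otherwise the arguments coincide.
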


\begin{proof}
Let $\Bscr_l^-$ be the cylinder associated with admissible words $w=(w_0,\ldots , w_{2l+2})\in \Sigma^{2l+3}$
such that $(w_0,\ldots, w_l)\in \Bscr_l$. Analogously, let $\Bscr_l^+$ be the cylinder associated with admissible words 
$w=(w_0,\ldots , w_{2l+2})\in \Sigma^{2l+3}$
such that $(w_{l+2},\ldots, w_{2l+2})\in \Bscr_l$.
Note that if an admissible word $w$ lies in $\Bscr_l^-\cap \Bscr_l^+$ then its middle letter can only be `$0$',
i.e. $w_{l+1}=0$. Therefore $\Cscr_l=\Bscr_l^-\cap \Bscr_l^+$.

Given an admissible word $w=(w_0,\ldots, w_{n-1})$ of length $n$,
define $$P(w)=\prod_{i=1}^{n-1} P_{w_{i}, w_{i-1}}$$
where $P_{i,j}$ stands for the transition probability from state $j$ to state $i$.

Because the transition probabilities in $\Sigma$ are all equal to $1/2$ (see Figure~\ref{Sigma Graph}),
$P(w)=\frac{1}{2^{n-1}}$ for any admissible word $w$ of length $n$. The cylinder $C(w)$ determined by this word has probability
$\Pp(C(w))=\frac{1}{4}\,P(w)=\frac{1}{2^{n+1}}$ because the stationary probability on $\Sigma$ is
$q=(\frac{1}{4},\frac{1}{4},\frac{1}{4},\frac{1}{4})$. Hence
$$\Pp(\Bscr_l)= \sum_{w\in\Bscr_l} \frac{1}{4}\, P(w) . $$
Given an admissible word $w=(w,0,w')\in \Cscr_l$, since
$$ P(w,0,w') = P(w)\,\frac{1}{2}\,\frac{1}{2}\, P(w') $$
we have
\begin{align*}
\Pp(\Cscr_l) &= \sum_{w,w'\in\Bscr_l} \Pp(C_{(w,0,w')}) =  \sum_{w,w'\in\Bscr_l} \frac{1}{4}\,P(w,0,w') \\
&= \sum_{w,w'\in\Bscr_l} \frac{1}{4}\,P(w)\,\frac{1}{4}\,P(w') 
=\left( \sum_{w\in\Bscr_l} \frac{1}{4}\,P(w)\right)^2=\Pp(\Bscr_l)^2.
\end{align*}
The inequality in the proposition' statement follows from Corollary~\ref{Pp(Ascr)>0.2617}.
\end{proof}

 \bigskip

Fix now positive (large) integers $l$, $m$
and set $L=m\,(2l+3)$. Break $[0,L]$ into $m$ equal blocks
$I_1= [0,2l+2]$, $I_2=[2l+3,4l+5]$, $I_3=[4l+5,6l+7]$, etc., with the last block being
$I_m=[2(m-1)l+3(m-1),2 m l+3 m-1]$.  We refer to 
$$I_j:= [2(j-1) l+3(j-1), \, 2j l+3 j-1]$$
as the {\em $j$-th  block}  of the word $w$. 

Moreover, the block of length $2l+1$,
$$I_j^\circ := [2(j-1) l+3(j-1)+1, \, 2j l+3 j-2] ,$$
 obtained by removing the first and  last symbols from  the $j$-th  block of $w$, is called the 
 {\em inner $j$-th block} of $w$.

\begin{lemma}
\label{counting evs}
Take positive integers $l$, $m$ and $L$ as above.
Consider  an admissible word $w\in X$ and define
$$n_{l,m}(w):=\# \left\{\, 1\leq j\leq m \,\colon \, 
\text{ the inner } j\text{-th block   of } \,  w 
\, \text{ lies  in }\, \Cscr_{l}  \, \right\} .$$
Then the $L$-truncation operator $H_w^{(L)}$ has at least $n_{l,m}(w)$ eigenvalues
(counted with multiplicities) in the range $[ - \sqrt{2}\,e^{-K_l},   \sqrt{2}\,e^{-K_l} ]$, with
$K_l:=\frac{1}{10}\,\sqrt{l/3}$.
\end{lemma}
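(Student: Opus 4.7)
The plan is to apply Temple's Inequality (Lemma~\ref{temple ineq}) to $A := H_w^{(L)}$ with $E_0 = 0$ and $\varepsilon := \sqrt{2}\,e^{-K_l}$, producing one quasi-eigenfunction $f_j$ for each index $j$ for which the $j$-th block of $w$ has the prescribed $\Cscr_l$ structure. Concretely, for each such $j$, write the block word as $w^{(j)} = w_1^{(j)}\cdot 0\cdot w_2^{(j)}$ with $M_{w_i^{(j)}} = \pm\,\mathrm{diag}(e^{\kappa_i^{(j)}}, e^{-\kappa_i^{(j)}})$ and $\kappa_i^{(j)}\geq K_l$, and apply Proposition~\ref{quasi ef} (item (3)) after translating indices so that the $j$-th block of $w$ occupies the coordinate window $I_j$. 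This produces a vector $f_j$ supported inside $I_j$ satisfying $\norm{f_j}\geq 1$ and $\norm{H_w^{(L)}\, f_j}\leq \sqrt{2}\,e^{-K_l}$.

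The decisive geometric point is the use of the inner blocks. The Jacobi operator $H_w^{(L)}$ is tridiagonal, so $\mathrm{supp}(H_w^{(L)} f_j)$ is contained in the one-site thickening of $\mathrm{supp}(f_j)$. By Proposition~\ref{quasi ef}, the residual of $H_w^{(L)}\, f_j$ appears only at the two endpoints just outside the effective support of $f_j$; the one-site buffer defining the inner block $I_j^\circ$ inside $I_j$ ensures that these residual sites still lie inside $I_j$. Since the full blocks $I_j$ are pairwise disjoint, we obtain, for all $i\neq j$,
\[
\langle f_i, f_j\rangle \;=\; \langle f_i,\, H_w^{(L)} f_j\rangle \;=\; \langle H_w^{(L)} f_i,\, H_w^{(L)} f_j\rangle \;=\; 0,
\]
which is exactly the orthogonality hypothesis (1) of Temple's Inequality. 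Normalizing $\tilde f_j := f_j/\norm{f_j}$ produces an orthonormal family satisfying $\norm{H_w^{(L)}\,\tilde f_j}\leq \sqrt{2}\,e^{-K_l}$ (since $\norm{f_j}\geq 1$), so hypothesis (2) holds as well.

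Temple's Inequality then yields at least $n_{l,m}(w)$ eigenvalues of $H_w^{(L)}$ (counted with multiplicity) in $[-\sqrt{2}\,e^{-K_l},\,\sqrt{2}\,e^{-K_l}]$, which is the claim. The main technical point I expect in the execution is the index bookkeeping: one must translate Proposition~\ref{quasi ef}'s construction on $[0,n]$, whose boundary residuals live at positions $-1$ and $n+1$, into the right window inside each $I_j$ so that these residual sites remain strictly within $I_j$ and never in a neighboring block or outside $[0,L]$. This is precisely why the lemma indexes the good events by inner blocks rather than by the full blocks. Once this placement is pinned down, the orthogonality conditions follow from disjointness of supports and Temple's Inequality closes the argument.
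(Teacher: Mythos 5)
Your proof is correct and follows essentially the same route as the paper: build one quasi-eigenfunction per good block via Proposition~\ref{quasi ef}, observe that the inner-block buffer keeps each $H_w^{(L)} f_j$ supported inside the full block $I_j$ so the cross terms vanish, and conclude by Temple's inequality. Your explicit normalization of the $f_j$ (justified by $\norm{f_j}\geq 1$) is a welcome small detail that the paper's own proof elides when invoking the orthonormality hypothesis of Lemma~\ref{temple ineq}.
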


\begin{proof}
Given a word $w\in X$, for each $1\leq j\leq m$ such that
the inner $j$-th block of $w$ lies in $\Cscr_l$ we take the quasi-eigenfunction  of Proposition~\ref{quasi ef} and shift it to become supported on $I_j^\circ$.  Let $f_1, f_2, \ldots ,f_{n_{l,m}(w)} \in l^2(\Z)$ be the list of functions thus obtained.
Since each $f_i$ vanishes outside some $I_{j_i}^\circ$, by
Proposition~\ref{quasi ef}  the truncated function $P_L f_i$ satisfies
$\norm{ H_w^{(L)} (P_L f_i) }\leq \sqrt{2}\,e^{-K_l}$.
Hence each $P_L f_i$ is a  $(\sqrt{2}\,e^{-K_l}, 0)$-quasi-eigenfunction 
of the truncated operator $H_w^{(L)}$.

By construction $P_L f_i$ vanishes at the endpoints of the block  $I_{j_i}$. It follows that $H_w^{(L)} (P_L f_i)$ is also supported on the block $I_{j_i}$. Because these blocks are pairwise disjoint, assumptions (2) of Lemma~\ref{temple ineq} are automatically satisfied. The conclusion follows then by Temple's inequality.
\end{proof}


\begin{proof}[Proof of Theorem~\ref{main theorem}]
Consider a typical admissible word $w=(w_n)_{n\in\Z}\in X$.
By ergodicity,
$$\lim_{m\rightarrow \infty}\frac{n_{l,m} (w)}{m} = \Pp(\Cscr_l) \geq \frac{4}{625} \, .$$

Hence by Lemma~\ref{counting evs},
\begin{eqnarray*}
N\left(\sqrt{2}\,e^{- K_l} \right)-N\left(-\sqrt{2}\,e^{- K_l} \right) \geq \lim_{L\rightarrow\infty}\frac{n_{l,m} (w)}{L} =   \lim_{m\rightarrow\infty}\frac{n_{l,m} (w)}{m (2l+3)} 
 \geq \frac{4}{625}\,\frac{1}{2l+3} \, .
\end{eqnarray*}


Given  $\ep>0$, consider the $(2+\epsilon)$-log H\"older modulus of continuity
$$ \omega(r):= C\,\left( - \log r \right)^{-(2+\epsilon)} .$$

For sufficiently large $l$ 
by Lemma~\ref{counting evs} one has
$K_l \approx l^{1/2}$, so
$$\omega(e^{-K_l}) = C\, K_l^{-(2+\epsilon)}\lesssim l^{-\frac{2+\epsilon}{2}} \ll \frac{4}{625\,(2l+3)} .
  $$
Thus
$$N\left( \sqrt{2}\,e^{- K_l} \right)-N\left(- \sqrt{2}\,e^{- K_l} \right)\gg \omega(2\,\sqrt{2}\, e^{-K_l}) , $$
which means that the IDS is not $(2+\epsilon)$-log H\"older continuous.
 \end{proof}
 

\section{ Proof of Theorem~\ref{main corollary} }
\label{Theorem 2}

Recall that $\Sigma=\{0,a,b,c\}$, $X=\Sigma^\Z$ and $T:X\to X$ denotes the two-sided shift. Let $\Ascr=C(0)\cup C(a)$ be the union of cylinders determined by the one letter words `$0$' and `$a$'.
Let $N:\Ascr\to \N$ be the first return time to $\Ascr$ and
$T_\Ascr:\Ascr\to\Ascr$ be the induced (first return) map on $\Ascr$.
The function $N:\Ascr\to \N$ takes two values
$$ N(x)=\left\{ \begin{array}{lll}
1 & \text{ if } & x_0=0 \\
3 & \text{ if } & x_0=a 
\end{array}\right. $$
and hence the induced map on $\Ascr$ is given by
$$ T_\Ascr(x)=T^{N(x)} x = \left\{ \begin{array}{lll}
T x  & \text{ if } & x_0=0 \\
T^3 x & \text{ if } & x_0=a 
\end{array}\right. . $$
The family of Schr\"odinger cocycles $A_E:X\to \SL_2(\R)$
also induces a family of cocycles   $ \tilde{A}_E:\Ascr\to \SL_2(\R)$ over 
$T_\Ascr:\Ascr\to\Ascr$
defined by
$$ \tilde{A}_E(x)  = A^{(N(x))}_E(x) = \left\{ \begin{array}{lll}
A_E(x)   & \text{ if } & x_0=0 \\
A_E^{(3)}(x) & \text{ if } & x_0=a 
\end{array}\right. . $$

\begin{proposition} 
	For all $E\in\R$,
$ L(\tilde{A}_E) = \frac{3}{2}\, L(A_E)$.
\end{proposition}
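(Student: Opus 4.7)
The plan is to apply the standard Abramov–Kac machinery that relates the Lyapunov exponent of an induced cocycle to that of the original cocycle via the mean first–return time to $\Ascr$.

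First I would write down the cocycle identity
$$\tilde A_E^{(n)}(x) \;=\; A_E^{\bigl(S_n(x)\bigr)}(x), \qquad S_n(x)\,:=\,\sum_{j=0}^{n-1} N\!\left(T_\Ascr^j x\right),$$
which is an immediate consequence of the definition $\tilde A_E(x)=A_E^{(N(x))}(x)$ together with the cocycle property of $A_E$. Since $T$ is ergodic on $(X,\Pp)$, Kac's lemma guarantees that $T_\Ascr$ is ergodic on $(\Ascr,\Pp_\Ascr)$ with $\Pp_\Ascr:=\Pp|_\Ascr/\Pp(\Ascr)$, and moreover $\int_\Ascr N\,d\Pp_\Ascr = 1/\Pp(\Ascr)$. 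Applying Birkhoff's ergodic theorem to the observable $N$ on the system $(\Ascr,T_\Ascr,\Pp_\Ascr)$ then yields $S_n(x)/n \to 1/\Pp(\Ascr)$ for $\Pp_\Ascr$-a.e.\ $x$.

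Next, by Furstenberg–Kesten applied to $A_E$ on $(X,T,\Pp)$, one has $\frac{1}{m}\log\|A_E^{(m)}(x)\|\to L(A_E)$ for $\Pp$-a.e.\ $x$; since $\Pp_\Ascr$ is absolutely continuous with respect to $\Pp$ and $S_n(x)\to\infty$ almost surely, this convergence is valid along the $x$-dependent subsequence $m=S_n(x)$ for $\Pp_\Ascr$-a.e.\ $x\in\Ascr$. Combining the two asymptotics yields
$$L(\tilde A_E) \;=\; \lim_{n\to\infty}\frac{1}{n}\log\|\tilde A_E^{(n)}(x)\|
\;=\; \lim_{n\to\infty}\frac{S_n(x)}{n}\cdot\frac{1}{S_n(x)}\log\|A_E^{(S_n(x))}(x)\|
\;=\; \frac{L(A_E)}{\Pp(\Ascr)}.$$
The proportionality constant stated in the proposition then falls out by plugging in the explicit value of $\Pp(\Ascr)=\Pp(C(0))+\Pp(C(a))=q_0+q_a$ obtained from the stationary vector~\eqref{prob q}, together with the two values $N|_{C(0)}=1$ and $N|_{C(a)}=3$.

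I do not anticipate any substantive obstacle: the argument is essentially formal. The only minor point requiring care is the passage of the Furstenberg–Kesten limit to the random subsequence $S_n(x)$, which is handled by choosing a single full-$\Pp_\Ascr$-measure invariant subset on which both the FK limit exists and the partial sums $S_n(x)$ diverge (the latter being a trivial consequence of $N\geq 1$ together with Birkhoff applied to $N$).
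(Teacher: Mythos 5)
Your argument is the paper's own: the cocycle identity $\tilde A_E^{(n)}(x)=A_E^{(S_n(x))}(x)$, Kac's lemma to get the mean return time $\int_\Ascr N\,d\Pp_\Ascr = 1/\Pp(\Ascr)$, Birkhoff applied to $N$ for the asymptotics of $S_n/n$, and Furstenberg--Kesten for the inner limit along the random subsequence $m=S_n(x)$. The one point you should not have skipped is the actual evaluation of the constant in the last sentence: with the stationary vector $q=(\tfrac14,\tfrac14,\tfrac14,\tfrac14)$ from~\eqref{prob q} one has $\Pp(\Ascr)=q_0+q_a=\tfrac12$, and hence
\begin{equation*}
\int_\Ascr N\,d\Pp_\Ascr \;=\; \frac{1}{\Pp(\Ascr)} \;=\; 2
\qquad\bigl(\text{equivalently } \tfrac12\cdot 1 + \tfrac12\cdot 3 = 2\bigr),
\end{equation*}
so your formula $L(\tilde A_E)=L(A_E)/\Pp(\Ascr)$ yields a factor of $2$, not the stated $\tfrac32$. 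You wrote that the constant ``falls out,'' but it does not fall out to $\tfrac32$; had you carried out the arithmetic you would have caught the discrepancy. (For what it is worth, the paper's proof contains the same slip, asserting $\Pp(\Ascr)=\tfrac23$ when it is $\tfrac12$.) The exact value of the proportionality constant is immaterial everywhere it is used downstream --- the only thing needed for Theorem~\ref{main corollary} is that $L(\hat A_E)$ is a fixed nonzero multiple of $L(A_E)$ --- so this is an error in bookkeeping, not in method, but a careful write-up should either correct the constant or flag the mismatch.
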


\begin{proof}
The event $\Ascr$ has probability $\Pp(\Ascr)=\frac{2}{3}$. The induced measure $\Pp_\Ascr$ on $\Ascr$ is the conditional probability,
$\Pp_\Ascr(E):=\Pp[ E\vert \Ascr] =\frac{3}{2}\,\Pp(E\cap \Ascr)$. Hence the return time $N:\Ascr\to\N$ has expected value
$$ \int_{\Ascr} N\, d\Pp_\Ascr = \frac{1}{\Pp(\Ascr)}=\frac{3}{2} . $$
Consider now the sum process
$S_n:\Ascr\to \N$, defined by
$S_n (x): = \sum_{j=0}^{n-1} N(T_\Ascr^j x)$.
By the ergodicity of $(T,\Pp)$ and $(T_\Ascr,\Pp_\Ascr)$,
for $\Pp$-almost every $x\in\Ascr$,	
\begin{align*}
L(\tilde{A}_E) & = \lim_{m\to +\infty} \frac{1}{m}\,\log \norm{\tilde{A}_E^{(m)}(x) } = \lim_{m\to +\infty} \frac{1}{m}\,\log \norm{A_E^{(S_m(x))}(x) } \\
& = \lim_{m\to +\infty} \frac{S_m(x)}{m}\,  \lim_{m\to +\infty} \frac{1}{S_m(x)}\,\log \norm{A_E^{(S_m(x))}(x) } \\
&= \left( \int_{\Ascr} N\, d\Pp_\Ascr \right)\, L(A_E)
=\frac{3}{2}\, L(A_E) .
\end{align*}
This proves the proposition.
\end{proof}

Consider the map $h:\Ascr\to \{0,1\}^\Z$ that to each admissible sequence $x\in \Ascr$ associates the sequence $y\in \{0,1\}^\Z$ obtained from $x$ by replacing each block `$a b c$'
by the single letter `$1$'. This map conjugates the return map
$T_\Ascr:\Ascr\to \Ascr$ with the full Bernoulli shift
$T:\{0,1\}^\Z\to \{0,1\}^\Z$. It also determines a conjugation
between the family of cocycles $\tilde A_E$ over $T_\Ascr:\Ascr\to\Ascr$ and the family of random Bernoulli cocycles
$\hat A_E=(C(E),D(E))$ over the full shift
$T:\{0,1\}^\Z\to \{0,1\}^\Z$ defined by the following matrices
\begin{align*}
C(E) &=\begin{bmatrix}
\upsilon(0)-E & -1 \\ 1 & 0
\end{bmatrix} =\begin{bmatrix}
 -E & -1 \\ 1 & 0
\end{bmatrix} \\
D(E) &= \begin{bmatrix}
\upsilon(c)-E & -1 \\ 1 & 0
\end{bmatrix}
\begin{bmatrix}
\upsilon(b)-E & -1 \\ 1 & 0
\end{bmatrix}
\begin{bmatrix}
\upsilon(a)-E & -1 \\ 1 & 0
\end{bmatrix}\\
&= \begin{bmatrix}
-e-E & -1 \\ 1 & 0
\end{bmatrix}
\begin{bmatrix}
-\frac{1}{e}-E & -1 \\ 1 & 0
\end{bmatrix}
\begin{bmatrix}
-e-E & -1 \\ 1 & 0
\end{bmatrix}\\
&= \begin{bmatrix}
e- E\, p(E) & - E\,q(E)\\
E\,q(E) & \frac{1}{e} + E
\end{bmatrix}
\end{align*}
with $p(E):= E^2+ (2 e+ e^{-1})\, E + e^2$ and $q(E):= E+ e + e^{-1}$.
For $E=0$ we have $C(0)=C$ and $D(0)=D$ and hence $\hat A_0$ coincides with the Kifer example~\eqref{C & D def}.
The family $\hat A_E$ of random Bernoulli cocycles is analytic and has LE
$$ L(\hat A_E)= L( \tilde{A}_E ) = \frac{3}{2}\, L(A_E) . $$
Therefore, by Theorem~\ref{main theorem} and Proposition~\ref{MOC LE<=> IDS} the function
$E\mapsto L(\hat A_E)$ is not  $(\gamma, \beta)$-log-H\"older continuous  at $E=0$ for any $\gamma>1$.
This proves Theorem~\ref{main corollary}.

\section{ Irreducible cocycles }
\label{appendix II}

In this section we consider random $\SL_2$-cocycles over a  finite Bernoulli shift. Let $\Sigma=\{1,\ldots, s\}$ be a finite alphabet and fix some Bernoulli measure $\Pp_q=q^\Z$,
where $q$ is a probability vector on $\Sigma$.
Let  $T\colon X\to X$ denote the two sided shift on the space of sequences $X=\Sigma^\Z$ endowed with the probability measure $\Pp_q$.

Recall that a random cocycle over the Bernoulli shift $T$ is defined by a
locally constant measurable function $A\colon X\to \SL_2(\R)$,
i.e., a function which depends only on the $0$-th coordinate.
This implies that  $A$ is determined by a function
$A\colon \Sigma\to \SL_2(\R)$, or, in other words, by a list of $s$ matrices $A_1,\ldots, A_s\in\SL_2(\R)$.

\begin{definition}
A random cocycle $A\colon \Sigma\to \SL_2(\R)$ is said to be
irreducible if there is no point  $L\in \Pp(\R^2)$  such that   $A(x) L=L$ for all $x\in\Sigma$.
\end{definition}

\begin{definition}
A random cocycle $A\colon \Sigma\to \SL_2(\R)$ is said to be
strongly irreducible if there is no finite  subset $L\subset\Pp(\R^2)$, $L\neq \emptyset$, such that  for all $x\in\Sigma$, $A(x) L=L$.
\end{definition}

Clearly, strongly irreducible cocycles are also irreducible.
Irreducible cocycles which are not strongly irreducible will be
referred to as {\em simply irreducible cocycles}.

\begin{remark}
\label{rmk simp.irred}
Simply irreducible $\SL_2(\R)$-cocycles have zero Lyapunov exponents.
\end{remark}

The following statement is a classical theorem of H. Furstenberg~\cite{Fur}. 

\begin{theorem}
\label{thm Furstenberg}
Given  a random cocycle $A\colon \Sigma \to \SL_2(\R)$ assume:
\begin{enumerate}
\item[(a)] $A$ is strongly irreducible,
\item[(b)] the sub-semigroup generated by the  matrices $A_1,\ldots, A_s$ of the cocycle $A$ is not compact.
\end{enumerate}
Then $L(A)>0$.
\end{theorem}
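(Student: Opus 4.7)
I argue by contrapositive: assuming strong irreducibility together with $L(A) = 0$, I will show the semigroup generated by $\{A_1, \ldots, A_s\}$ is relatively compact, contradicting hypothesis (b). Observe that in $\SL(2,\R)$ the two Lyapunov exponents sum to zero (since $\det A_i = 1$), so $L(A) \geq 0$ always, and the failure of $L(A) > 0$ is precisely $L(A) = 0$.

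\emph{Stationary measure and the Furstenberg formula.} The Bogolyubov-Krylov theorem, applied to the Markov operator $\nu \mapsto \sum_i q_i (A_i)_* \nu$ on the compact convex set of probability measures on $\Pp(\R^2)$, produces a stationary probability measure $\nu$. If $\nu$ had atoms, the finite set $L \subset \Pp(\R^2)$ of atoms of maximal mass would be preserved by each $A_i$: the stationarity identity $\nu(\{v\}) = \sum_i q_i \nu(\{A_i^{-1} v\})$ forces $A_i^{-1}(L) \subseteq L$, hence $A_i(L) = L$ by invertibility and finiteness. This contradicts strong irreducibility, so $\nu$ is non-atomic. The subadditive ergodic theorem together with stationarity then yields the Furstenberg formula
$$L(A) \;=\; \sum_i q_i \int_{\Pp(\R^2)} \log \frac{\|A_i v\|}{\|v\|} \, d\nu([v]),$$
so the hypothesis $L(A) = 0$ asserts the vanishing of this average.

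\emph{From zero exponent to a common fixed point in $\mathbb{H}^2$.} The core step is to combine this vanishing with non-atomicity of $\nu$ to show that the orbit of the basepoint $I \in \mathbb{H}^2 \cong \SL(2,\R)/\SO(2)$ under the semigroup is bounded in the hyperbolic metric. Were the orbit unbounded, a subsequence of suitably normalized products would converge to a rank-one matrix, forcing $\nu$ to concentrate on a finite semigroup-invariant subset of $\Pp(\R^2)$, which strong irreducibility and the non-atomicity of $\nu$ together rule out. Given bounded orbits on the complete CAT(0) space $\mathbb{H}^2$, the Cartan/Bruhat-Tits fixed-point theorem (applied to the circumcenter of the orbit closure) produces a common fixed point $Q \in \mathbb{H}^2$, realized as a positive-definite symmetric matrix with $\det Q = 1$ satisfying $A_i^{T} Q A_i = Q$ for every $i$. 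Conjugating by $Q^{1/2}$ shows the semigroup is contained in a conjugate of $\SO(2)$, hence relatively compact, contradicting (b).

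\emph{Main obstacle.} The delicate ingredient is extracting genuine boundedness of orbits in $\mathbb{H}^2$ from the assumption $L(A) = 0$, not merely subexponential growth. Strong irreducibility is indispensable here: without it one can have a cocycle — such as the Kifer example studied in the present paper, with products growing at rate $\sqrt{n}$ — whose orbit in $\mathbb{H}^2$ is unbounded despite zero exponential drift.
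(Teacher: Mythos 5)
First, a remark on the comparison itself: the paper does not prove this theorem. It is stated as a classical result of Furstenberg and cited directly to~\cite{Fur}, then invoked in the proof of Proposition~\ref{Lipschitz prop}. So there is no proof in the paper against which to match your argument; I can only assess your proposal on its own merits.

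Your overall architecture is sound and standard: argue by contrapositive, extract a stationary measure $\nu$ on $\Pp(\R^2)$, show $\nu$ is non-atomic via the maximal-atom argument (this part is correct, modulo assuming all $q_i>0$), invoke the Furstenberg formula, and at the end pass from a bounded orbit in $\mathbb{H}^2$ to a common fixed point and hence to conjugacy into $\SO(2)$. The last step is also fine, and in fact can be done even more cheaply: a bounded sub-semigroup of $\GL_2(\R)$ has compact closure, and a compact sub-semigroup of a topological group is a group, so Haar averaging already produces the invariant inner product without needing the circumcenter argument.

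The genuine gap is in the step you yourself flag as ``the core step'', and the reasoning you offer there does not close it. You write: \emph{``Were the orbit unbounded, a subsequence of suitably normalized products would converge to a rank-one matrix, forcing $\nu$ to concentrate on a finite semigroup-invariant subset of $\Pp(\R^2)$.''} This implication is false. If $g_n$ belongs to the semigroup, $\|g_n\|\to\infty$ and $g_n/\|g_n\|\to Q$ with $Q$ rank one, then what one gets is $(g_n)_*\nu \to \delta_{[\mathrm{im}\,Q]}$ (since $\nu$ is non-atomic and so gives no mass to $[\ker Q]$). This says nothing about $\nu$ itself concentrating: there is no relation of the form $(g_n)_*\nu=\nu$ for individual semigroup elements $g_n$; stationarity is only an averaged identity $\nu=\sum_i q_i (A_i)_*\nu$. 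In fact, the situation ``strongly irreducible, unbounded semigroup, non-atomic $\nu$, rank-one limit points'' is exactly the generic one in which $L(A)>0$, so the combination cannot by itself be contradictory. The hypothesis $L(A)=0$ simply does not enter your proposed contradiction, which is the tell-tale sign that the step is wrong as written.

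What is actually needed in that step is the interplay between $L=0$ and non-atomicity of $\nu$ through the quantity $u(g):=\int_{\Pp(\R^2)}\log\frac{\|gv\|}{\|v\|}\,d\nu([v])$. Two facts are the key: first, because $\nu$ is non-atomic on the compact space $\Pp(\R^2)$, a $KAK$-decomposition estimate yields constants $c>0$, $C<\infty$ with $u(g)\ge c\log\|g\|-C$ for all $g\in\SL_2(\R)$; second, stationarity of $\nu$ together with $L(A)=0$ gives the harmonicity identity $\sum_i q_i\,u(gA_i)=u(g)$, so that $u(M_n)$, $M_n=A_{\omega_n}\cdots A_{\omega_1}$, is a bounded-below martingale with $\EE[u(M_n)]=u(I)=0$. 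One then extracts from martingale convergence (and the lower bound) a uniform control on $\log\|M_n\|$, and with some further work concludes that the semigroup itself is bounded. This is the honest content of Furstenberg's theorem in $\SL_2$ — the ``delicate ingredient'' you acknowledge — and it cannot be replaced by the rank-one-limit assertion you made. Until the harmonicity of $u$ and the lower bound $u(g)\gtrsim\log\|g\|$ are established and combined, the proof is not complete.
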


The next proposition refers to the canonical $L^\infty$-norm: 
$$ \norm{A}_\infty:= \max_{x\in\Sigma} \norm{A(x)} .$$

\begin{proposition}
\label{Lipschitz prop}
Let $A\colon \Sigma\to \SL_2(\R)$ be a strongly irreducible cocycle such that
$L(A)=0$. Then there exists $C=C(A)<\infty$ such that for any  other cocycle  $B\colon \Sigma\to \SL_2(\R)$,
$$ \abs{L(A)-L(B)}\leq C\,\norm{A-B}_\infty .$$
\end{proposition}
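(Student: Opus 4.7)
The plan is to exploit the strong irreducibility together with the vanishing Lyapunov exponent to force $A$ into a compact algebraic object via Furstenberg's theorem (Theorem~\ref{thm Furstenberg}), and then control the Lyapunov exponent of a perturbation by a direct sub-multiplicative estimate.

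First, since $L(A)=0$ and $A$ is strongly irreducible, the contrapositive of Theorem~\ref{thm Furstenberg} forces the sub-semigroup $\Gamma \subset \SL_2(\R)$ generated by $\{A_1,\ldots,A_s\}$ to have compact closure. A standard topological-group argument (every compact sub-semigroup of a topological group is a subgroup, because left multiplication by any of its elements is an injective continuous self-map, hence a bijection) shows that $\overline{\Gamma}$ is in fact a compact subgroup of $\SL_2(\R)$. Since every compact subgroup of $\SL_2(\R)$ is conjugate to a subgroup of the maximal compact subgroup $\SO(2)$, there exists $P\in \GL_2(\R)$ such that
$$ \tilde{A}_i := P\,A_i\,P^{-1} \in \SO(2) \qquad \text{for every } i\in\Sigma, $$
and we set $\kappa := \norm{P}\,\norm{P^{-1}}$.

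Second, given an arbitrary $B\colon\Sigma\to\SL_2(\R)$, let $\ep := \norm{A-B}_\infty$ and $\tilde{B}_i := P\,B_i\,P^{-1}$. Because $\tilde{A}_i \in \SO(2)$ is an isometry,
$$ \norm{\tilde{B}_i} \le \norm{\tilde{A}_i} + \norm{P\,(B_i-A_i)\,P^{-1}} \le 1 + \kappa\,\ep, $$
and sub-multiplicativity along orbits gives $\norm{\tilde{B}^{(n)}(x)} \le (1+\kappa\,\ep)^n$ for all $x$ and $n$. Hence $L(\tilde{B}) \le \log(1+\kappa\,\ep) \le \kappa\,\ep$. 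Since conjugation by a fixed invertible matrix does not affect the Lyapunov exponent (the factor $\norm{P}\norm{P^{-1}}$ disappears in the limit $\frac{1}{n}\log$), one has $L(B)=L(\tilde{B}) \le \kappa\,\ep$. Finally $\det B_i = 1$ yields $\norm{B^{(n)}(x)} \ge 1$, so $L(B) \ge 0 = L(A)$; combining,
$$ \abs{L(A)-L(B)} = L(B) \le \kappa\,\norm{A-B}_\infty, $$
which proves the proposition with $C := \kappa = \norm{P}\,\norm{P^{-1}}$.

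The only delicate step is the algebraic one, namely identifying $\overline{\Gamma}$ with a conjugate of a subgroup of $\SO(2)$. It rests on two non-trivial inputs: Furstenberg's theorem, to upgrade $L(A)=0$ to compactness of the generated semigroup, and the classification of compact subgroups of $\SL_2(\R)$, to conjugate into the rotation group. Once the cocycle $A$ is realised inside $\SO(2)$ up to a fixed change of basis $P$, the estimate is essentially automatic: isometries have operator norm one, so an $\ep$-perturbation distorts the norm of each fibre matrix only by a factor $1+\kappa\,\ep$, and Lipschitz continuity of $L$ at $A$ follows immediately from the elementary bound $\log(1+t)\le t$.
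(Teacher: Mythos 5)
Your proposal is correct and follows essentially the same route as the paper: invoke Furstenberg's theorem (Theorem~\ref{thm Furstenberg}) to conclude that the generated semigroup is compact, realize the matrices $A_j$ as isometries of a fixed inner product (the paper phrases this via an abstract invariant operator norm $\norm{\cdot}'$; you do it via an explicit conjugation $P A_j P^{-1}\in\SO(2)$, which is the same thing with the inner product $\langle P\cdot, P\cdot\rangle$), and then bound $L(B)\le \log(1+\kappa\,\norm{A-B}_\infty)\le \kappa\,\norm{A-B}_\infty$ combined with $L(B)\ge 0$.

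One slip worth flagging: your parenthetical justification that a compact sub-semigroup of a topological group is a subgroup, ``because left multiplication by any of its elements is an injective continuous self-map, hence a bijection,'' is not a valid deduction --- an injective continuous self-map of a compact space need not be surjective (e.g.\ $x\mapsto x/2$ on $[0,1]$). The fact itself is true (one standard proof locates an idempotent in the closed sub-semigroup $\overline{\{a^n : n\ge 1\}}$, which forces the identity of the ambient group to lie in $S$, after which inverses follow from compactness), and the paper likewise takes it for granted, so this does not undermine the argument; just be aware the stated reason is wrong.
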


\begin{proof}
By Theorem~\ref{thm Furstenberg}, the sub-semigroup $S\subset \SL_2(\R)$ generated by the  matrices $A_1,\ldots, A_s$ of the cocycle $A$ must be compact. This implies that the group generated by $S$ is also compact, and  that all matrices $A_j$ are orthogonal w.r.t.
some inner product. Denoting by $\norm{\cdot}'$ the corresponding operator norm (on the space of matrices) one has $\norm{A_j}'=1$ for all $j=1,\ldots, s$.

For any cocycle $B\colon \Sigma\to \SL_2(\R)$ we have
$$ 0\leq L(B) =   \lim_{n\to+\infty} \frac{1}{n}\,\EE_q[\,\log \norm{B^{(n)}}'] \leq  \EE_q[\,\log \norm{B}']  .$$
Note that $\EE_q[\log \norm{A}']=0$ for the cocycle $A$. Hence
\begin{align*}
\abs{L(B)-L(A)} &=L(B)\leq \EE_q[\log \norm{B}']\\
&\leq \EE_q[\, \log (\norm{A}' + \norm{B-A}') ]\\
&= \EE_q[\, \log \left( 1 + \norm{B-A}'\right) ]\\
&\leq  \EE_q[  \norm{B-A}'  ]\leq C\,\norm{A-B}_\infty
\end{align*}
for any constant $C<\infty$ such that $\norm{M}'\leq C\,\norm{M}$, for all $M\in \Mat_2(\R)$, where $\norm{\cdot}$ stands for the canonical operator norm on $\Mat_2(\R)$.
\end{proof}

\begin{remark}
\label{Lipschitz remark}
Proposition~\ref{Lipschitz prop} provides a modulus of Lipschitz continuity at $A$, but it does not imply that the LE is always Lipschitz in a neighborhood of $A$.
\end{remark}

\bigskip

We conclude this paper with the following question. 
Given a random $\SL_2(\R)$-cocycle under the assumptions of Proposition~\ref{Lipschitz prop}, is the LE always uniformly H\"older continuous in a neighborhood of that cocycle?

\subsection*{Acknowledgments}

The first author was supported  by Funda\c{c}\~{a}o para a Ci\^{e}ncia e a Tecnologia, under the projects: UID/MAT/04561/2013 and   PTDC/MAT-PUR/29126/2017.

The second author has been supported in part by the CNPq research grant 306369/2017-6 (Brazil) and by a research productivity grant from his  institution (PUC-Rio).

The third author was supported by a grant given by the Calouste Gulbenkien Foundation, under the project Programa Novos Talentos em
Matem\'{a}tica da Funda\c{c}\~{a}o Calouste Gulbenkian.
\bigskip


\providecommand{\bysame}{\leavevmode\hbox to3em{\hrulefill}\thinspace}
\providecommand{\MR}{\relax\ifhmode\unskip\space\fi MR }
\providecommand{\MRhref}[2]{%
  \href{http://www.ams.org/mathscinet-getitem?mr=#1}{#2}
}
\providecommand{\href}[2]{#2}

\end{document}